\newtheorem{theorem}{Theorem}
\newtheorem{lemma}{Lemma}
\DeclareMathOperator{\RM}{Re}
\begin{document}
\title[Composition formulas of Bessel-Struve Kernel function]{Composition formulas of Bessel-Struve Kernel function}
\author{K.S Nisar}
\address{Department of Mathematics, College of Arts and Science-Wadi
Addwasir,\\
Prince Sattam bin Abdulaziz University, Saudi Arabia}
\email{ksnisar1@gmail.com}
\author{S.R Mondal}
\address{Department of Mathematics and Statistics, College of Science,\\
King Faisal University, Saudi Arabia}
\email{smondal@kfu.edu.sa}
\author{P.Agarwal}
\address{Anand International college of engineering,\\
Jaipur, India}
\email{goyal.praveen2011@gmail.com}

\subjclass[2000]{Primary 05C38, 15A15; Secondary 05A15, 15A18}
\keywords{Bessel Function, Struve function, Bessel Struve kernel function,
Pathway integral representations}

\begin{abstract}
The generalized operators of fractional integration involving Appell's
function $F_{3}(.) $ due to Marichev-Saigo-Maeda, is applied to the Bessel
Struve kernel function $S_{\alpha }\left( \lambda z\right),\lambda ,z\in 
\mathbb{C}$ to obtain the results in terms of generalized Wright
functions.The pathway integral representations Bessel Struve kernel function
and its relation between many other functions also derived in this study.
\end{abstract}

\maketitle

\section{Introduction}

The fractional calculus is one of the most rapidly growing subject of
applied mathematics that deals with derivatives and integrals of arbitrary
orders. The applications of fractional integral operator involving various
special functions has found in various sub fields such as statistical
distribution theory, control theory, fluid dynamics, stochastic dynamical
system, plasma, image processing, nonlinear biological systems,
astrophysics, and in quantum mechanics (see \cite{Balenu-b1,
Kiriyakova-b1,Balenu-b2,Purohit-b1}).

The influence of fractional integral operators involving various special
functions in fractional calculus is very important as its significance and
applications in various sub-fields of applied mathematical analysis. Many
studies related to the fractional calculus found in the papers of Love \cite%
{Love}, McBride \cite{McBride}, Kalla \cite{Kalla1, Kalla2}, Kalla and
Saxena \cite{Kalla3, Kalla4}, Saigo \cite{Saigo2,Saigo1, Saigo3} , Saigo and
Maeda \cite{Saigo4}, Kiryakova \cite{Kiriyakova-b2} gave various
applications and extensions of hypergeometric operators of fractional
integration. A comprehensive explanation of such operators can be found in
the research monographs by Miller and Ross \cite{Miller-Ross} and Kiryakova 
\cite{Kiriyakova-b2}.

A useful generalization of the hypergeometric fractional integrals,
including the Saigo operators (\cite{Saigo1, Saigo2, Saigo3}), has been
introduced by Marichev \cite{Marichev1} (see details in Samko et al. \cite%
{Samko1}, p. 194) and later extended and studied by Saigo and Maeda (\cite%
{Saigo4}, p.393) in term of any complex order with Appell function $%
F_{3}\left( .\right) $ in the kernel, as follows:

\bigskip Let $\alpha $, $\alpha ^{\prime }$, $\beta $, $\beta ^{\prime }$, $%
\gamma \in \mathbb{C}$ and $x>0$, then the generalized fractional calculus
operator involving the Appell functions, or Horn's function are defined as
follows:

\begin{equation}
\left( I_{0,{+}}^{\alpha ,\alpha ^{\prime },\beta ,\beta ^{\prime },\gamma
}f\right) (x)=\frac{x^{-\alpha }}{\Gamma (\gamma )}\int_{0}^{x}(x-t)^{\gamma
-1}t^{-\alpha ^{\prime }}F_{3}\left( \alpha ,\alpha ^{\prime },\beta ,\beta
^{\prime };\gamma ;1-\frac{t}{x},1-\frac{x}{t}\right) f(t)dt,  \label{1}
\end{equation}

and

\begin{equation}
\left( I_{0,{-}}^{\alpha ,\alpha ^{\prime },\beta ,\beta ^{\prime },\gamma
}f\right) (x)=\frac{x^{-\alpha ^{\prime }}}{\Gamma (\gamma )}%
\int_{x}^{\infty }(t-x)^{\gamma -1}t^{-\alpha }F_{3}\left( \alpha ,\alpha
^{\prime },\beta ,\beta ^{\prime };\gamma ;1-\frac{t}{x},1-\frac{x}{t}%
\right) f(t)dt,  \label{2}
\end{equation}

with $\RM(\gamma )>0$. The generalized fractional integral operators of the
type $(\ref{1})$ and $(\ref{2})$ has been introduced by Marichev \cite%
{Marichev1} and later extended and studied by Saigo and Maeda \cite{Saigo4}%
.This operator together known as the Marichev-Saigo-Maeda operator. For the
definition of the Appell function $F_{3}\left( :\right) $ the interested
readers may refer to the monograph by Srivastava and Karlson \cite%
{Srivastava-Karlson} (see also Erd%
\'{}%
elyi et al. \cite{Erdelyi} and Prudnikov et al. \cite{Prudnikov}).

The fractional integral operator has many interesting application in various
sub-fields in applicable mathematical analysis For example, \cite%
{Kim-Lee-HMSri} has applications related to certain class of complex
analytic functions. The results given in \cite{Kir1, Miller-Ross,
HMSri-frac-Hfunction, Nisar} can be referred for some basic results on
fractional calculus.

Following two results given by Saigo et. al. \cite{saxena-saigo, Saigo4} are
needed in sequel.

\begin{lemma}
\label{lem-1} Let $\alpha ,\alpha ^{\prime },\beta ,\beta ^{\prime },\gamma
,\rho \in \mathbb{C}$ be such that $\RM{(\gamma)}>0$ and 
\begin{equation*}
\RM{(\rho)}>\max \{0,\RM{(\alpha-\alpha'-\beta-\gamma)},\RM{(\alpha'-\beta')}%
\}.
\end{equation*}

Then there exists the relation

\begin{eqnarray*}
\left( I_{0,+}^{\alpha ,\alpha ^{\prime },\beta ,\beta ^{\prime },\gamma
}~t^{\rho -1}\right) (x) =\Gamma \left[ 
\begin{array}{lll}
\rho , & \rho +\gamma -\alpha -\alpha ^{\prime }-\beta, & \rho +\beta {%
^{\prime }}-\alpha {^{\prime }} \\ 
\rho +\beta {^{\prime }}, & \rho +\gamma -\alpha -\alpha {^{\prime }}, & 
\rho +\gamma -\alpha {^{\prime }}-\beta%
\end{array}%
\right] x^{\rho -\alpha -\alpha ^{\prime }+\gamma -1},
\end{eqnarray*}

where

\begin{equation*}
\Gamma \left[ 
\begin{array}{lll}
a, & b, & c \\ 
d, & e, & f%
\end{array}%
\right] =\frac{\Gamma {(a)}\Gamma {(b)}\Gamma {(c)}}{\Gamma {(d)}\Gamma {(e)}%
\Gamma {(f)}}.
\end{equation*}
\end{lemma}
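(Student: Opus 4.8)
The plan is to collapse the two–dimensional operator to a one–dimensional integral and then to dispose of its only non-convergent ingredient by a Pfaff transformation, after which the evaluation is routine. First I would set $f(t)=t^{\rho-1}$ in $(\ref{1})$ and substitute $t=x\tau$, $\tau\in(0,1)$. The factors $(x-t)^{\gamma-1}$, $t^{-\alpha'}$, $t^{\rho-1}$ and $dt$ combine to the power $x^{\rho-\alpha-\alpha'+\gamma-1}$ — precisely the power of $x$ in the claim — while the two arguments of $F_{3}$ become $1-\tau$ and $1-1/\tau$. Thus the lemma is equivalent to the scalar identity
\[
\frac{1}{\Gamma(\gamma)}\int_{0}^{1}\tau^{\rho-\alpha'-1}(1-\tau)^{\gamma-1}F_{3}\!\left(\alpha,\alpha',\beta,\beta';\gamma;1-\tau,1-\tfrac{1}{\tau}\right)d\tau
=\Gamma\!\left[
\begin{array}{lll}
\rho, & \rho+\gamma-\alpha-\alpha'-\beta, & \rho+\beta'-\alpha' \\
\rho+\beta', & \rho+\gamma-\alpha-\alpha', & \rho+\gamma-\alpha'-\beta
\end{array}
\right].
\]

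Next I would regroup the defining double series of $F_{3}$ according to its first index, so that
\[
F_{3}\!\left(\alpha,\alpha',\beta,\beta';\gamma;1-\tau,1-\tfrac1\tau\right)
=\sum_{m\ge0}\frac{(\alpha)_m(\beta)_m}{(\gamma)_m\,m!}\,(1-\tau)^m\;{}_2F_{1}\!\left(\alpha',\beta';\gamma+m;1-\tfrac1\tau\right).
\]
Here is the main obstacle: the inner argument $1-1/\tau$ is negative and leaves the unit disc once $\tau<\tfrac12$, so this ${}_2F_{1}$ cannot be expanded and integrated term by term. The step that rescues the computation is Pfaff's transformation: since $z=1-1/\tau$ obeys $1-z=1/\tau$ and $z/(z-1)=1-\tau\in(0,1)$,
\[
{}_2F_{1}\!\left(\alpha',\beta';\gamma+m;1-\tfrac1\tau\right)
=\tau^{\alpha'}\,{}_2F_{1}\!\left(\alpha',\gamma+m-\beta';\gamma+m;1-\tau\right),
\]
whose argument now lies in $(0,1)$; moreover the gained factor $\tau^{\alpha'}$ merges with $\tau^{\rho-\alpha'-1}$ into the clean weight $\tau^{\rho-1}$.

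After this substitution every series converges on $(0,1)$ and the remainder is bookkeeping. Expanding the transformed ${}_2F_{1}(\,\cdot\,;1-\tau)$ in powers of $1-\tau$ and integrating term by term against $\tau^{\rho-1}(1-\tau)^{\gamma+m-1}$ produces Beta integrals, and the emerging inner series is summed by Gauss's theorem ${}_2F_{1}(a,b;c;1)=\frac{\Gamma(c)\Gamma(c-a-b)}{\Gamma(c-a)\Gamma(c-b)}$; this yields a factor $\Gamma(\rho)\Gamma(\rho+\beta'-\alpha')/\Gamma(\rho+\beta')$ and lets $\Gamma(\gamma+m)$ cancel $(\gamma)_m$ down to $\Gamma(\gamma)$, which in turn cancels the prefactor $1/\Gamma(\gamma)$. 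A second application of Gauss to the surviving series ${}_2F_{1}(\alpha,\beta;\rho+\gamma-\alpha';1)$ supplies the remaining factors $\Gamma(\rho+\gamma-\alpha-\alpha'-\beta)/\!\left(\Gamma(\rho+\gamma-\alpha-\alpha')\Gamma(\rho+\gamma-\alpha'-\beta)\right)$, and assembling the pieces reproduces the six–factor $\Gamma$–quotient. The two summations demand $\RM(\rho+\beta'-\alpha')>0$ and $\RM(\rho+\gamma-\alpha-\alpha'-\beta)>0$ — the non-trivial hypotheses of the lemma — while $\RM(\rho)>0$ and $\RM(\gamma)>0$ guarantee the Beta integrals; the only genuinely delicate point is the Pfaff step that legitimises the term-by-term integration.
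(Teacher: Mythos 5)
Your derivation is correct, but note that the paper itself offers no proof of this lemma: it is quoted verbatim from Saxena--Saigo and Saigo--Maeda as a known composition formula, so your self-contained argument is necessarily a different route from the paper's (which is simply a citation). What your approach buys is a transparent reduction: the substitution $t=x\tau$ correctly isolates the power $x^{\rho+\gamma-\alpha-\alpha'-1}$, the regrouping of the $F_{3}$ double series by its first index is exactly the factorization $(\gamma)_{m+n}=(\gamma)_m(\gamma+m)_n$, and the Pfaff transformation with $z=1-1/\tau$ (so $1-z=1/\tau$, $z/(z-1)=1-\tau$) is the right device both to continue the inner ${}_2F_1$ outside the unit disc and to absorb $\tau^{\alpha'}$ into the weight; the two applications of Gauss's theorem then reproduce the six-factor $\Gamma$-quotient exactly. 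Two remarks. First, your computation yields the convergence condition $\RM(\rho+\gamma-\alpha-\alpha'-\beta)>0$, i.e.\ $\RM(\rho)>\RM(\alpha+\alpha'+\beta-\gamma)$; this exposes a sign typo in the lemma as printed ($\RM(\alpha-\alpha'-\beta-\gamma)$), and agrees with the hypothesis actually used in Theorem \ref{thm-1}. Second, the interchanges of summation and integration (the $m$-sum with the $\tau$-integral, and the term-by-term integration of the transformed ${}_2F_1$ near $\tau=0$, where its argument tends to $1$) are asserted rather than justified; they do hold under the stated hypotheses --- integrability near $\tau=0$ is governed precisely by $\RM(\rho+\beta'-\alpha')>0$ and $\RM(\rho+\gamma-\alpha-\alpha'-\beta)>0$ --- but a fully rigorous version would invoke absolute convergence or analytic continuation in the parameters at this point. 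With that caveat, the proof is sound and matches the classical derivation in the cited sources.
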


\begin{lemma}
\label{lem-2} Let $\alpha,\alpha^{\prime },\beta,\beta^{\prime },\gamma,\rho
\in \mathbb{C}$ be such that $\RM{(\gamma)}>0$ and 
\begin{equation*}
\RM{(\rho)}< 1+ \min\{\RM{(-\beta)}, \RM{(\alpha+\alpha'-\gamma)}, %
\RM{(\alpha+\beta'-\gamma)}\}.
\end{equation*}

Then there exists the relation

\begin{eqnarray*}
&&\left( I_{0,-}^{\alpha ,\alpha ^{\prime },\beta ,\beta ^{\prime },\gamma
}\;t^{\rho -1}\right) (x)=\Gamma \left[ 
\begin{array}{lll}
1-\rho -\gamma +\alpha +\alpha ^{\prime },1-\rho +\alpha +\beta {^{\prime }}%
,1-\rho -\beta &  &  \\ 
1-\rho ,1-\rho +\alpha +\alpha ^{\prime }+\beta +\beta ^{\prime }-\gamma
,1-\rho +\alpha -\beta &  & 
\end{array}%
\right] x^{\rho -\alpha -\alpha ^{\prime }+\gamma -1}.
\end{eqnarray*}
\end{lemma}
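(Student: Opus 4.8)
The plan is to obtain Lemma~\ref{lem-2} from Lemma~\ref{lem-1} through the inversion $t\mapsto 1/t$, which interchanges the operators $I_{0,-}$ and $I_{0,+}$. A direct term-by-term treatment is unappealing here: for $t>x$ the first argument $1-t/x$ of $F_3$ lies outside the polydisc of convergence of its defining double series, so expanding the kernel would be purely formal. Instead, I would set $f(t)=t^{\rho-1}$ in $(\ref{2})$ and substitute $t=1/s$, $x=1/\xi$. This carries the ray $[x,\infty)$ to the finite interval $(0,\xi]$, sends $t^{-\alpha}$ to $s^{\alpha}$, the weight $(t-x)^{\gamma-1}$ to $\big((\xi-s)/(s\xi)\big)^{\gamma-1}$, and $dt$ to $-s^{-2}\,ds$, so the improper integral takes exactly the form defining $I_{0,+}$ in $(\ref{1})$.

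Next I would use the symmetry of Appell's function,
\[
F_3(\alpha,\alpha',\beta,\beta';\gamma;X,Y)=F_3(\alpha',\alpha,\beta',\beta;\gamma;Y,X),
\]
which is immediate from the double series upon swapping the summation indices. After the substitution the kernel is $F_3(\alpha,\alpha',\beta,\beta';\gamma;1-\xi/s,1-s/\xi)$, and the symmetry turns it into $F_3(\alpha',\alpha,\beta',\beta;\gamma;1-s/\xi,1-\xi/s)$, the exact kernel of $I_{0,+}^{\alpha',\alpha,\beta',\beta,\gamma}$. Gathering the remaining powers of $s$ and $\xi$, the whole quantity equals $\xi^{\,2\alpha'-\gamma+1}\big(I_{0,+}^{\alpha',\alpha,\beta',\beta,\gamma}s^{\sigma-1}\big)(\xi)$, where the shifted exponent $\sigma=2\alpha-\gamma-\rho+1$ is forced by matching the accumulated factor $s^{\alpha-\gamma-1}f(1/s)$ against the factor $s^{-\alpha}$ occurring in $(\ref{1})$.

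Finally I would invoke Lemma~\ref{lem-1} with the parameters permuted as $(\alpha,\alpha',\beta,\beta')\mapsto(\alpha',\alpha,\beta',\beta)$ and $\rho$ replaced by $\sigma$; this is permissible once the bound on $\RM(\rho)$ in Lemma~\ref{lem-2} is recast as the hypothesis $\RM(\sigma)>\max\{0,\RM(\alpha'-\alpha-\beta'-\gamma),\RM(\alpha-\beta)\}$ required there, and the original integral is seen to converge at infinity. Restoring $\xi=1/x$ returns the power to $x^{\rho-\alpha-\alpha'+\gamma-1}$. The main obstacle is this last reduction: Lemma~\ref{lem-1} delivers a quotient of six Gamma functions whose arguments are affine in $\sigma$ and the interchanged parameters, and one must collapse it—using only $\Gamma(z+1)=z\,\Gamma(z)$—to the Gamma-quotient $\Gamma[\,\cdots\,]$ written in the statement, while verifying that the admissibility ranges for $\RM(\rho)$ coincide.
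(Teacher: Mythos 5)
The paper offers no proof of Lemma~\ref{lem-2} at all --- it is quoted, together with Lemma~\ref{lem-1}, from Saigo et al.\ \cite{saxena-saigo, Saigo4} --- so there is no in-paper argument to measure yours against; I can only assess the attempt on its own terms. Your idea of reducing $I_{0,-}$ to $I_{0,+}$ by the inversion $t=1/s$, $x=1/\xi$ is the right one in principle, but as executed it cannot reach the stated formula, and the failure sits exactly in the step you postpone to the end. Taking definition (\ref{2}) literally, your substitution followed by the $F_3$ symmetry does produce $\xi^{2\alpha'-\gamma+1}\bigl(I_{0,+}^{\alpha',\alpha,\beta',\beta,\gamma}s^{\sigma-1}\bigr)(\xi)$ with $\sigma=2\alpha-\gamma-\rho+1$. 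Feeding this $\sigma$ and the permuted parameters into Lemma~\ref{lem-1} gives Gamma factors with arguments $1-\rho+2\alpha-\gamma$, $1-\rho+\alpha-\alpha'-\beta'$, $1-\rho+\alpha+\beta-\gamma$ over $1-\rho+2\alpha+\beta-\gamma$, $1-\rho+\alpha-\alpha'$, $1-\rho+\alpha-\beta'$. These differ from the arguments in Lemma~\ref{lem-2} by quantities such as $\alpha-\alpha'$, which are not integers for generic parameters, so no application of $\Gamma(z+1)=z\,\Gamma(z)$ can collapse the one quotient into the other; likewise $\RM(\sigma)>0$ becomes $\RM(\rho)<1+\RM(2\alpha-\gamma)$ rather than the required $\RM(\rho)<1+\RM(\alpha+\alpha'-\gamma)$. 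The tell-tale doubled parameters $2\alpha$, $2\alpha'$ are the sign that your bookkeeping has landed on the wrong operator identity.

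The root cause is that (\ref{2}) as printed carries a transcription error: in Saigo--Maeda's definition the right-sided kernel is $F_3\bigl(\alpha,\alpha',\beta,\beta';\gamma;1-\tfrac{x}{t},1-\tfrac{t}{x}\bigr)$, with the two arguments interchanged relative to (\ref{1}). With that kernel, $t=1/s$ sends it directly to $F_3\bigl(\alpha,\alpha',\beta,\beta';\gamma;1-\tfrac{s}{\xi},1-\tfrac{\xi}{s}\bigr)$ --- the left-sided kernel with the \emph{same} parameter order, no $F_3$ symmetry needed --- and the mismatch between the weights $x^{-\alpha'},t^{-\alpha}$ and $x^{-\alpha},t^{-\alpha'}$ is simply absorbed into the power function. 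One then obtains $\xi^{\alpha+\alpha'-\gamma+1}\bigl(I_{0,+}^{\alpha,\alpha',\beta,\beta',\gamma}s^{\sigma-1}\bigr)(\xi)$ with $\sigma=1-\rho+\alpha+\alpha'-\gamma$, and Lemma~\ref{lem-1} then delivers the six Gamma factors of Lemma~\ref{lem-2} verbatim, with no collapsing required and with the three bounds on $\RM(\rho)$ mapping term by term onto the hypotheses of Lemma~\ref{lem-1}. (Strictly, it delivers the standard form of the lemma: the check $\alpha=\alpha'=\beta=\beta'=0$, where the integral equals $\Gamma(1-\rho-\gamma)x^{\rho+\gamma-1}/\Gamma(1-\rho)$, shows the second numerator entry should read $1-\rho+\alpha+\beta'-\gamma$ and the middle denominator entry $1-\rho+\alpha+\alpha'+\beta'-\gamma$; the version displayed above is itself mis-copied.) So your plan is salvageable, but only after correcting the kernel in (\ref{2}); as written, the final reduction you defer is genuinely impossible.
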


\section{Representations in term of generalized Wright functions}

\label{sec:wright-fun} The generalized Wright hypergeometric function $%
{}_{p}\psi _{q}(z)$ which is defined by the series 
\begin{equation}
{}_{p}\psi _{q}(z)={}_{p}\psi _{q}\left[ 
\begin{array}{c}
(a_{i},\alpha _{i})_{1,p} \\ 
(b_{j},\beta _{j})_{1,q}%
\end{array}%
\bigg|z\right] =\displaystyle\sum_{k=0}^{\infty }\dfrac{\prod_{i=1}^{p}%
\Gamma (a_{i}+\alpha _{i}k)}{\prod_{j=1}^{q}\Gamma (b_{j}+\beta _{j}k)}%
\dfrac{z^{k}}{k!}.  \label{Wright1}
\end{equation}%
Here $a_{i},b_{j}\in \mathbb{C}$, and $\alpha _{i},\beta _{j}\in \mathbb{R}$
($i=1,2,\ldots ,p;$ $j=1,2,\ldots ,q$). Asymptotic behavior of this function
for large values of argument of $z\in {\mathbb{C}}$ were studied in \cite{
Fox} and under the condition 
\begin{equation}
\displaystyle\sum_{j=1}^{q}\beta _{j}-\displaystyle\sum_{i=1}^{p}\alpha
_{i}>-1  \label{Wright2}
\end{equation}%
in \cite{Wright-2,Wright-3}. Properties of this generalized Wright function
were investigated in \cite{Kilbas, Kilbas-itsf, Kilbas-frac}. In particular,
it was proved \cite{Kilbas} that ${}_{p}\psi _{q}(z)$, $z\in {\mathbb{C}}$
is an entire function under the condition ($\ref{Wright2}$). Interesting
results related to generalized Wright functions are also given in \cite%
{HMSri-Wright}. For proceeding the coming subsections we nee to recall the
following definitions. The Bessel and modified Bessel function of first
kind , the Struve function $H_{v}\left( z\right) $ and modified Struve
function $L_{v}\left( z\right) $ possess power series representation of the
form \cite{Watson}%
\begin{equation}
J_{\upsilon }\left( z\right) =\sum\limits_{k=0}^{\infty }\frac{\left(
-1\right) ^{k}\left( \frac{z}{2}\right) ^{2k+\upsilon }}{\Gamma \left(
k+\upsilon +1\right) k!},  \label{Bessel1}
\end{equation}

\begin{equation}
I_{\upsilon }\left( z\right) =\sum\limits_{k=0}^{\infty }\frac{\left( \frac{z%
}{2}\right) ^{2k+\upsilon }}{\Gamma \left( k+\upsilon +1\right) k!}
\label{Bessel2}
\end{equation}

\begin{equation}
H_{\upsilon }\left( z\right) =\left( \frac{z}{2}\right) ^{\upsilon
+1}\sum\limits_{k=0}^{\infty }\frac{\left( -1\right) ^{k}}{\Gamma \left( k+%
\frac{3}{2}\right) \Gamma \left( k+\upsilon +\frac{1}{2}\right) }\left( 
\frac{z}{2}\right) ^{2k}  \label{eqn-1-Struve}
\end{equation}

and

\begin{equation}
L_{\upsilon }\left( z\right) =\left( \frac{z}{2}\right) ^{\upsilon
+1}\sum\limits_{k=0}^{\infty }\frac{1}{\Gamma \left( k+\frac{3}{2}\right)
\Gamma \left( k+\upsilon +\frac{1}{2}\right) }\left( \frac{z}{2}\right) ^{2k}
\label{eqn-2-Struve}
\end{equation}

The purpose of this work is to investigate compositions of integral
transforms ($\ref{1}$) and ($\ref{2}$) with the Bessel Struve kernel
function $S_{\alpha }\left( \lambda z\right) $ defined for $z,\lambda \in 
\mathbb{C}$. The Bessel-Struve kernel $S_{\alpha }\left( \lambda z\right)
,\lambda \in \mathbb{C},$ \cite{Gasmi} which is unique solution of the
initial value problem $l_{\alpha }u\left( z\right) =\lambda ^{2}u\left(
z\right) $ with the initial conditions $u\left( 0\right) =1$ and $%
u^{^{\prime }}\left( 0\right) =\lambda \Gamma \left( \alpha +1\right) / (%
\sqrt{\pi }\Gamma \left( \alpha +3/2\right)) $ is given by $S_{\alpha
}\left( \lambda z\right) =j_{\alpha }\left( i\lambda z\right) -ih_{\alpha
}\left( i\lambda z\right) ,\forall z\in C $ where $j_{\alpha }$ and $%
h_{\alpha }$ are the normalized Bessel and Struve functions.

Moreover, the Bessel-Struve kernel is a holomorphic function on $\mathbb{C}%
\times \mathbb{C}$ and it can be expanded in a power series in the form 
\begin{equation}
S_{\alpha }\left( \lambda z\right) =\sum_{n=0}^{\infty }\frac{\left( \lambda
z\right) ^{n}\Gamma \left( \alpha +1\right) \Gamma \left( \frac{n+1}{2}%
\right) }{\sqrt{\pi }n!\Gamma \left( \frac{n}{2}+\alpha +1\right) },
\label{3}
\end{equation}

Now, we derive the following theorems,

\begin{theorem}
\label{thm-1} Let $\alpha ,\alpha ^{\prime },\beta ,\beta ^{\prime },\gamma
,\rho ,p,b,c,\lambda ,\nu \in \mathbb{C}$ . Suppose that $\RM{(\gamma)}>0$
and $\RM{(\rho+ n)}>\max \{0,\RM{(\alpha+\alpha'+\beta-\gamma)},%
\RM{(\alpha'-\beta')}\}.$ Then 
\begin{align*}
& \left( I_{0,+}^{\alpha ,\alpha ^{\prime },\beta ,\beta ^{\prime },\gamma
}t^{\rho -1}S_{\nu }\left( \lambda t\right) \right) (x)=x^{\rho +\gamma
-\alpha -\alpha ^{^{\prime }}-1}\frac{\Gamma \left( \nu +1\right) }{\sqrt{%
\pi }} \\
& \times _{4}\Psi _{4}\left[ 
\begin{array}{c}
\left( \frac{1}{2},\frac{1}{2}\right) ,\left( \rho ,1\right) ,\left( \rho
+\gamma -\alpha -\alpha ^{^{\prime }}-\beta ,1\right) ,\left( \rho +\beta
^{^{\prime }}-\alpha ^{^{\prime }},1\right) \\ 
\left( \nu +1,\frac{1}{2}\right) ,\left( \rho +\beta ^{^{\prime }},1\right)
,\left( \rho +\gamma -\alpha -\alpha ^{^{\prime }},1\right) ,\left( \rho
+\gamma -\alpha ^{^{\prime }}-\beta ,n\right)%
\end{array}%
\left|\right. x\lambda \right]
\end{align*}
\end{theorem}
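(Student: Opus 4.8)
The plan is to expand the Bessel--Struve kernel into its power series \eqref{3}, integrate the resulting monomials term by term using Lemma~\ref{lem-1}, and then reassemble the series into a generalized Wright function. First I would insert \eqref{3} into the left-hand side and interchange summation and fractional integration, writing
\[
\left( I_{0,+}^{\alpha ,\alpha ^{\prime },\beta ,\beta ^{\prime },\gamma } t^{\rho -1} S_{\nu}(\lambda t)\right)(x) = \frac{\Gamma(\nu+1)}{\sqrt{\pi}} \sum_{n=0}^{\infty} \frac{\lambda^{n}\,\Gamma\!\left(\frac{n+1}{2}\right)}{n!\,\Gamma\!\left(\frac{n}{2}+\nu+1\right)} \left( I_{0,+}^{\alpha ,\alpha ^{\prime },\beta ,\beta ^{\prime },\gamma } t^{\rho+n-1}\right)(x).
\]
The interchange is legitimate because $S_{\nu}(\lambda t)$ is holomorphic on $\mathbb{C}\times\mathbb{C}$, so its series converges uniformly on the compact sets involved, while the growth hypothesis $\RM(\rho+n)>\max\{0,\RM(\alpha+\alpha'+\beta-\gamma),\RM(\alpha'-\beta')\}$ is precisely the condition making Lemma~\ref{lem-1} applicable (with $\rho$ replaced by $\rho+n$) to every monomial $t^{\rho+n-1}$ occurring in the sum.

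Next I would evaluate each inner integral by Lemma~\ref{lem-1}, obtaining
\[
\left( I_{0,+}^{\alpha ,\alpha ^{\prime },\beta ,\beta ^{\prime },\gamma } t^{\rho+n-1}\right)(x) = \frac{\Gamma(\rho+n)\,\Gamma(\rho+n+\gamma-\alpha-\alpha'-\beta)\,\Gamma(\rho+n+\beta'-\alpha')}{\Gamma(\rho+n+\beta')\,\Gamma(\rho+n+\gamma-\alpha-\alpha')\,\Gamma(\rho+n+\gamma-\alpha'-\beta)}\, x^{\rho+n-\alpha-\alpha'+\gamma-1}.
\]
Pulling the common factor $x^{\rho+\gamma-\alpha-\alpha'-1}$ out of the sum and combining the remaining $x^{n}$ with $\lambda^{n}$ into $(x\lambda)^{n}$, the expression becomes
\[
x^{\rho+\gamma-\alpha-\alpha'-1}\frac{\Gamma(\nu+1)}{\sqrt\pi}\sum_{n=0}^\infty \frac{\Gamma\!\left(\frac{n+1}{2}\right)\Gamma(\rho+n)\Gamma(\rho+n+\gamma-\alpha-\alpha'-\beta)\Gamma(\rho+n+\beta'-\alpha')}{\Gamma\!\left(\frac{n}{2}+\nu+1\right)\Gamma(\rho+n+\beta')\Gamma(\rho+n+\gamma-\alpha-\alpha')\Gamma(\rho+n+\gamma-\alpha'-\beta)}\frac{(x\lambda)^n}{n!}.
\]

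Finally I would match this series against the definition \eqref{Wright1}: reading off the numerator data $\left(\tfrac12,\tfrac12\right),(\rho,1),(\rho+\gamma-\alpha-\alpha'-\beta,1),(\rho+\beta'-\alpha',1)$ and the denominator data $\left(\nu+1,\tfrac12\right),(\rho+\beta',1),(\rho+\gamma-\alpha-\alpha',1),(\rho+\gamma-\alpha'-\beta,1)$ identifies the sum with the ${}_4\Psi_4$ of the statement. I would also verify the convergence condition \eqref{Wright2}, which here reads $\sum_j\beta_j-\sum_i\alpha_i=\tfrac72-\tfrac72=0>-1$, so the ${}_4\Psi_4$ is entire and all the manipulations are valid. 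I expect the only genuinely delicate point to be the justification of the termwise integration: establishing the uniform convergence that allows moving the operator inside the infinite series and confirming that the real-part hypothesis secures the applicability of Lemma~\ref{lem-1} uniformly in $n$. Once that is settled, the remainder is routine bookkeeping of the Gamma factors.
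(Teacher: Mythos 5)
Your proposal follows the paper's own proof essentially verbatim: expand $S_{\nu}(\lambda t)$ via \eqref{3}, interchange sum and operator, apply Lemma~\ref{lem-1} with $\rho\mapsto\rho+n$, factor out $x^{\rho+\gamma-\alpha-\alpha'-1}$, and recognize the ${}_4\Psi_4$. Your version is in fact slightly cleaner, since your denominator parameter $(\rho+\gamma-\alpha'-\beta,1)$ corrects the evident typo $(\rho+\gamma-\alpha'-\beta,n)$ in the stated theorem, and you additionally check the Wright convergence condition \eqref{Wright2}, which the paper omits.
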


\begin{proof}
Using $(\ref{1})$ and $(\ref{3})$,

\begin{eqnarray*}
&&\left( I_{0,+}^{\alpha ,\alpha ^{\prime },\beta ,\beta ^{\prime },\gamma
}t^{\rho -1}S_{\nu }\left( \lambda t\right) \right) (x) \\
&=&\left( I_{0,+}^{\alpha ,\alpha ^{\prime },\beta ,\beta ^{\prime },\gamma
}\sum_{n=0}^{\infty }\frac{\left( \lambda \right) ^{n}\Gamma \left( \nu
+1\right) \Gamma \left( \left( n+1\right) /2\right) }{\sqrt{\pi }n!\Gamma
\left( n/2+\nu +1\right) }t^{\rho +n-1}\right) (x)
\end{eqnarray*}

By changing the order of integration and summation, 
\begin{eqnarray*}
&&\left( I_{0,+}^{\alpha ,\alpha ^{\prime },\beta ,\beta ^{\prime },\gamma
}t^{\rho -1}S_{\nu }\left( \lambda t\right) \right) (x) \\
&=&\sum_{n=0}^{\infty }\frac{\left( \lambda \right) ^{n}\Gamma \left( \nu
+1\right) \Gamma \left( \left( n+1\right) /2\right) }{\sqrt{\pi }n!\Gamma
\left( n/2+\nu +1\right) }(I_{0,+}^{\alpha ,\alpha ^{\prime },\beta ,\beta
^{\prime },\gamma }t^{\rho +n-1})(x).
\end{eqnarray*}%
Hence replacing $\rho $ by $\rho +n$ in Lemma $\ref{lem-1}$ , we obtain

\begin{eqnarray*}
&&\left( I_{0,+}^{\alpha ,\alpha ^{\prime },\beta ,\beta ^{\prime },\gamma
}t^{\rho -1}S_{\nu }\left( \lambda t\right) \right) (x) \\
&=&\sum_{n=0}^{\infty }\frac{\Gamma \left( \nu +1\right) \Gamma \left( \frac{%
n+1}{2}\right) \lambda ^{n}}{\sqrt{\pi }n!\Gamma \left( n/2+\nu +1\right) }
\\
&&\times \frac{\Gamma \left( \rho +n\right) \Gamma \left( \rho +n+\gamma
-\alpha -\alpha ^{^{\prime }}-\beta \right) \Gamma \left( \rho +n+\beta
^{^{\prime }}-\alpha ^{^{\prime }}\right) }{\Gamma \left( \rho +n+\beta
^{^{\prime }}\right) \Gamma \left( \rho +n+\gamma -\alpha -\alpha ^{^{\prime
}}\right) \Gamma \left( \rho +n+\gamma -\alpha ^{^{\prime }}-\beta \right) }%
x^{\rho +n+\gamma -\alpha -\alpha ^{^{\prime }}-1}
\end{eqnarray*}%
\begin{eqnarray*}
&=&\sum_{n=0}^{\infty }\frac{\Gamma \left( \nu +1\right) \Gamma \left( \frac{%
n}{2}-\frac{1}{2}+1\right) \lambda ^{n}}{\sqrt{\pi }n!\Gamma \left( n/2+\nu
+1\right) } \\
&&\times \frac{\Gamma \left( \rho +n\right) \Gamma \left( \rho +n+\gamma
-\alpha -\alpha ^{^{\prime }}-\beta \right) \Gamma \left( \rho +n+\beta
^{^{\prime }}-\alpha ^{^{\prime }}\right) }{\Gamma \left( \rho +n+\beta
^{^{\prime }}\right) \Gamma \left( \rho +n+\gamma -\alpha -\alpha ^{^{\prime
}}\right) \Gamma \left( \rho +n+\gamma -\alpha ^{^{\prime }}-\beta \right) }%
x^{\rho +n+\gamma -\alpha -\alpha ^{^{\prime }}-1}
\end{eqnarray*}%
\begin{eqnarray*}
&=&\frac{x^{\rho +\gamma -\alpha -\alpha ^{^{\prime }}-1}}{\sqrt{\pi }}%
\Gamma \left( \nu +1\right) \sum_{n=0}^{\infty }\frac{\Gamma \left( \frac{n}{%
2}+\frac{1}{2}\right) }{\Gamma \left( n/2+\nu +1\right) } \\
&&\times \frac{\Gamma \left( \rho +n\right) \Gamma \left( \rho +n+\gamma
-\alpha -\alpha ^{^{\prime }}-\beta \right) \Gamma \left( \rho +n+\beta
^{^{\prime }}-\alpha ^{^{\prime }}\right) }{\Gamma \left( \rho +n+\beta
^{^{\prime }}\right) \Gamma \left( \rho +n+\gamma -\alpha -\alpha ^{^{\prime
}}\right) \Gamma \left( \rho +n+\gamma -\alpha ^{^{\prime }}-\beta \right) }%
\frac{\left( x\lambda \right) ^{n}}{n!}
\end{eqnarray*}%
\begin{eqnarray*}
&=&\frac{x^{\rho +\gamma -\alpha -\alpha ^{^{\prime }}-1}}{\sqrt{\pi }}%
\Gamma \left( \nu +1\right) \\
&&\times _{4}\Psi _{4}\left[ 
\begin{array}{c}
\left( \frac{1}{2},\frac{1}{2}\right) ,\left( \rho ,1\right) ,\left( \rho
+\gamma -\alpha -\alpha ^{^{\prime }}-\beta ,1\right) ,\left( \rho +\beta
^{^{\prime }}-\alpha ^{^{\prime }},1\right) \\ 
\left( \nu +1,\frac{1}{2}\right) ,\left( \rho +\beta ^{^{\prime }},1\right)
,\left( \rho +\gamma -\alpha -\alpha ^{^{\prime }},1\right) ,\left( \rho
+\gamma -\alpha ^{^{\prime }}-\beta ,n\right)%
\end{array}%
\left| \lambda x\right. \right]
\end{eqnarray*}
\end{proof}

\begin{theorem}
\label{thm-2} Let $\alpha ,\alpha ^{\prime },\beta ,\beta ^{\prime },\gamma
,\rho ,p,b,c,\lambda ,\nu \in \mathbb{C}$ . Suppose that $\RM{(\gamma)}>0$
and $\RM{(\rho-
n)}<1+\min \{\RM{(-\beta)},\RM{(\alpha+\alpha'-\gamma)},\RM{(\alpha+\beta'-%
\gamma)}\}.$ Then 
\begin{align*}
& \left( I_{0,-}^{\alpha ,\alpha ^{\prime },\beta ,\beta ^{\prime },\gamma
}t^{\rho -1}S_{\nu }\left( \frac{\lambda }{t}\right) \right) (x) \\
& \quad =\frac{x^{\rho -\alpha -\alpha ^{^{\prime }}+\gamma -1}}{\sqrt{\pi }}%
\Gamma \left( \nu +1\right) \\
& \times _{4}\Psi _{4}\left[ 
\begin{array}{c}
\left( \frac{1}{2},\frac{1}{2}\right) ,\left( 1-\rho -\gamma +\alpha +\alpha
^{^{\prime }},1\right) ,\left( 1-\rho +\alpha +\beta ^{^{\prime }},1\right)
,\left( 1-\rho -\beta ^{^{\prime }},1\right) \\ 
\left( \nu +1,\frac{1}{2}\right) ,\left( 1-\rho ,1\right) ,\left( 1-\rho
+\alpha +\alpha ^{^{\prime }}+\beta +\beta ^{^{\prime }}-\gamma ,1\right)
,\left( 1-\rho +n+\alpha +\beta ,1\right)%
\end{array}%
\left\vert \lambda x\right. \right]
\end{align*}
\end{theorem}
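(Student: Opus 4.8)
The plan is to imitate the proof of Theorem~\ref{thm-1} almost verbatim, swapping $I_{0,+}$ for $I_{0,-}$ and Lemma~\ref{lem-1} for Lemma~\ref{lem-2}. First I would insert the power series $(\ref{3})$ for the kernel $S_{\nu}(\lambda/t)$. Since $(\lambda/t)^{n}=\lambda^{n}t^{-n}$, the integrand takes the form
\begin{equation*}
t^{\rho-1}S_{\nu}\!\left(\frac{\lambda}{t}\right)=\frac{\Gamma(\nu+1)}{\sqrt{\pi}}\sum_{n=0}^{\infty}\frac{\Gamma\!\left(\tfrac{n+1}{2}\right)}{n!\,\Gamma\!\left(\tfrac{n}{2}+\nu+1\right)}\,\lambda^{n}\,t^{(\rho-n)-1},
\end{equation*}
so each summand is, up to a constant, a pure power $t^{(\rho-n)-1}$.

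Next I would interchange the operator $I_{0,-}^{\alpha,\alpha',\beta,\beta',\gamma}$ with the summation and evaluate each term by Lemma~\ref{lem-2}, this time replacing $\rho$ by $\rho-n$ (the exponent of $t$ has been lowered by $n$, in contrast to Theorem~\ref{thm-1} where it was raised). This produces a quotient of six Gamma functions, each of whose arguments carries a $+n$, multiplied by $x^{(\rho-n)-\alpha-\alpha'+\gamma-1}$. Pulling the $n$-independent power $x^{\rho-\alpha-\alpha'+\gamma-1}$ out of the sum leaves the residual factor $\lambda^{n}x^{-n}=(\lambda/x)^{n}$, which becomes the argument of the emerging Wright function.

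Finally I would fold the two kernel Gamma factors $\Gamma\!\left(\tfrac{n}{2}+\tfrac12\right)$ and $\Gamma\!\left(\tfrac{n}{2}+\nu+1\right)$ into the numerator and denominator lists and compare term by term with the defining series $(\ref{Wright1})$: the four numerator pairs acquire coefficients $\tfrac12,1,1,1$ and the four denominator pairs coefficients $\tfrac12,1,1,1$, which is precisely a ${}_{4}\Psi_{4}$.

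I expect the only genuine obstacle to be the justification of the term-by-term action of $I_{0,-}$, that is, the legitimacy of interchanging the operator with $\sum_{n}$. For this one checks that the admissibility condition of Lemma~\ref{lem-2} holds for every $n\geq0$; this is automatic, since $\RM(\rho-n)$ decreases in $n$, so the bound $\RM(\rho-n)<1+\min\{\cdots\}$ is most stringent at $n=0$ and is inherited by all larger $n$. Convergence of the limiting series to an entire ${}_{4}\Psi_{4}$ then follows from $(\ref{Wright2})$, since here $\sum_{j}\beta_{j}-\sum_{i}\alpha_{i}=\tfrac72-\tfrac72=0>-1$.
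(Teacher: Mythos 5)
Your proposal follows the paper's own proof essentially verbatim: expand $S_{\nu}(\lambda/t)$ by $(\ref{3})$, interchange the operator with the sum, apply Lemma~\ref{lem-2} with $\rho$ replaced by $\rho-n$, and reassemble the resulting Gamma quotients as a ${}_{4}\Psi_{4}$; your handling of the admissibility condition (most stringent at $n=0$) and of the convergence criterion $(\ref{Wright2})$ is also what is needed. In fact your bookkeeping is more careful than the paper's: the residual factor is indeed $\lambda^{n}x^{-n}=(\lambda/x)^{n}$, so the argument of the Wright function should be $\lambda/x$, not the $\lambda x$ printed in the statement and in the paper's proof (which silently converts $x^{\rho-n-\alpha-\alpha'+\gamma-1}\lambda^{n}$ into $x^{\rho-\alpha-\alpha'+\gamma-1}(x\lambda)^{n}$ between two displays); likewise Lemma~\ref{lem-2} yields the parameters $1-\rho-\beta$ and $1-\rho+\alpha-\beta$ where the theorem prints $1-\rho-\beta'$ and $1-\rho+n+\alpha+\beta$ (the stray $n$ should not appear in a parameter of the Wright function at all). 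So your argument is correct and is the paper's argument, but carried out consistently; the discrepancies it exposes are errors in the printed statement rather than gaps in your proof.
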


\begin{proof}
Using $(\ref{2})$ and $(\ref{3})$, and then changing the order of
integration and summation, 
\begin{eqnarray*}
&&\left( I_{0,-}^{\alpha ,\alpha {^{\prime }},\beta ,\beta ^{\prime },\gamma
}t^{\rho -1}S_{\nu }\left( \frac{\lambda }{t}\right) \right) (x)
=\sum_{n=0}^{\infty }\frac{\left( \lambda \right) ^{n}\Gamma \left( \nu
+1\right) \Gamma \left( \left( n+1\right) /2\right) }{\sqrt{\pi }n!\Gamma
\left( n/2+\nu +1\right) }\left( I_{0,-}^{\alpha ,\alpha ^{\prime },\beta
,\beta ^{\prime },\gamma }t^{\rho -n-1}\right) \left( x\right) .
\end{eqnarray*}%
using Lemm $\ref{lem-2}$, we obtain%
\begin{eqnarray*}
&=&\sum_{n=0}^{\infty }\frac{\left( \lambda \right) ^{n}\Gamma \left( \nu
+1\right) \Gamma \left( \left( n+1\right) /2\right) }{\sqrt{\pi }n!\Gamma
\left( n/2+\nu +1\right) } \\
&&\times \frac{\Gamma \left( 1-\rho -n-\gamma +\alpha +\alpha ^{^{\prime
}}\right) \Gamma \left( 1-\rho -n+\alpha +\beta ^{^{\prime }}\right) \Gamma
\left( 1-\rho -n-\beta \right) }{\Gamma \left( 1-\rho -n\right) \Gamma
\left( 1-\rho -n+\alpha +\alpha ^{^{\prime }}+\beta +\beta ^{^{\prime
}}-\gamma \right) \Gamma \left( 1-\rho -n+\alpha +\beta \right) }x^{\rho
-n-\alpha -\alpha ^{^{\prime }}+\gamma -1}
\end{eqnarray*}%
\begin{eqnarray*}
&=&\frac{x^{\rho -\alpha -\alpha ^{^{\prime }}+\gamma -1}}{\sqrt{\pi }}\frac{%
\Gamma \left( \nu +1\right) \Gamma \left( \left( n+1\right) /2\right) }{%
\Gamma \left( n/2+\nu +1\right) } \\
&&\times \frac{\Gamma \left( 1-\rho +n-\gamma +\alpha +\alpha ^{^{\prime
}}\right) \Gamma \left( 1-\rho +n+\alpha +\beta ^{^{\prime }}\right) \Gamma
\left( 1-\rho +n-\beta \right) }{\Gamma \left( 1-\rho +n\right) \Gamma
\left( 1-\rho +n+\alpha +\alpha ^{^{\prime }}+\beta +\beta ^{^{\prime
}}-\gamma \right) \Gamma \left( 1-\rho +n+\alpha +\beta \right) }\frac{%
\left( x\lambda \right) ^{n}}{n!}
\end{eqnarray*}%
\begin{eqnarray*}
&=&\frac{x^{\rho -\alpha -\alpha ^{^{\prime }}+\gamma -1}}{\sqrt{\pi }}%
\Gamma \left( \nu +1\right) \\
&&\times _{4}\Psi _{4}\left[ 
\begin{array}{c}
\left( \frac{1}{2},\frac{1}{2}\right) ,\left( 1-\rho -\gamma +\alpha +\alpha
^{^{\prime }},1\right) ,\left( 1-\rho +\alpha +\beta ^{^{\prime }},1\right)
,\left( 1-\rho -\beta ^{^{\prime }},1\right) \\ 
\left( \nu +1,\frac{1}{2}\right) ,\left( 1-\rho ,1\right) ,\left( 1-\rho
+\alpha +\alpha ^{^{\prime }}+\beta +\beta ^{^{\prime }}-\gamma ,1\right)
,\left( 1-\rho +n+\alpha +\beta ,1\right)%
\end{array}%
\left\vert x\lambda \right. \right]
\end{eqnarray*}
\end{proof}

\subsection{Representation of Bessel Struve kernel function interms of
exponential function}

In this subsection we represent the Bessel Struve function interms of
exponential function. Also, we derive the Marichev Saigo Maeda operator
representation of special cases. The representation Bessel Struve Kernel
function interms of exponential function as:%
\begin{equation}
S_{\frac{-1}{2}}\left( x\right) =e^{x},  \label{e1}
\end{equation}

\begin{equation}
S_{\frac{1}{2}}\left( x\right) =\frac{-1+e^{x}}{x}.  \label{e2}
\end{equation}

Now, we give the the following theorems

\begin{theorem}
Let $\alpha ,\alpha ^{\prime },\beta ,\beta ^{\prime },\gamma ,\rho
,p,b,c,\lambda \in \mathbb{C}$ . Suppose that $\RM{(\gamma)}>0$ and $%
\RM{(\rho+ n)}>\max \{0,\RM{(\alpha+\alpha'+\beta-\gamma)},%
\RM{(\alpha'-\beta')}\}.$ Then

\begin{align*}
& \left( I_{0,+}^{\alpha ,\alpha ^{\prime },\beta ,\beta ^{\prime },\gamma
}t^{\rho -1}e^{t}\right) (x)=x^{\rho -\alpha -\alpha ^{^{\prime }}+\gamma -1}
\\
& \times _{3}\Psi _{3}\left[ 
\begin{array}{c}
\left( \rho ,1\right) ,\left( \rho +\gamma -\alpha -\alpha ^{^{\prime
}}-\beta ,1\right) ,\left( \rho +\beta ^{^{\prime }}-\alpha ^{^{\prime
}},1\right) ,\left( 1-\rho -\beta ^{^{\prime }},1\right) \\ 
\left( \rho +\beta ^{^{\prime }},1\right) ,\left( \rho +\gamma -\alpha
-\alpha ^{^{\prime }},1\right) ,\left( \rho +\gamma -\alpha ^{^{\prime
}}-\beta ,1\right)%
\end{array}%
\left\vert x\right. \right]
\end{align*}
\end{theorem}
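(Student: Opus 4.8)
The plan is to follow verbatim the strategy used in the proofs of Theorems \ref{thm-1} and \ref{thm-2}, but with the Bessel--Struve kernel specialised to the exponential through the representation $(\ref{e1})$, namely $S_{-1/2}(x)=e^{x}$. The essential simplification is that $e^{t}$ has the entire power series $e^{t}=\sum_{n=0}^{\infty} t^{n}/n!$, whose coefficients carry no extra Gamma factors (in contrast to the expansion $(\ref{3})$ of $S_{\nu}$, which contributed the $\left(\tfrac12,\tfrac12\right)$ and $\left(\nu+1,\tfrac12\right)$ entries). Consequently the action of $I_{0,+}^{\alpha,\alpha',\beta,\beta',\gamma}$ on $t^{\rho-1}e^{t}$ reduces to evaluating the operator on the monomials $t^{\rho+n-1}$, which is exactly the content of Lemma \ref{lem-1}, and the six Gamma quotients it produces assemble into a ${}_{3}\Psi_{3}$ rather than a ${}_{4}\Psi_{4}$.

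Concretely, first I would insert the series and write
\begin{equation*}
\left( I_{0,+}^{\alpha ,\alpha ^{\prime },\beta ,\beta ^{\prime },\gamma}t^{\rho -1}e^{t}\right) (x)=\left( I_{0,+}^{\alpha ,\alpha ^{\prime },\beta ,\beta ^{\prime },\gamma}\sum_{n=0}^{\infty }\frac{t^{\rho +n-1}}{n!}\right) (x).
\end{equation*}
After justifying the interchange of the integral operator with the infinite sum, I would pull the summation outside and apply Lemma \ref{lem-1} with $\rho$ replaced by $\rho+n$, so that each summand becomes
\begin{equation*}
\frac{1}{n!}\,\frac{\Gamma(\rho+n)\,\Gamma(\rho+n+\gamma-\alpha-\alpha'-\beta)\,\Gamma(\rho+n+\beta'-\alpha')}{\Gamma(\rho+n+\beta')\,\Gamma(\rho+n+\gamma-\alpha-\alpha')\,\Gamma(\rho+n+\gamma-\alpha'-\beta)}\,x^{\rho+n-\alpha-\alpha'+\gamma-1}.
\end{equation*}
Factoring out $x^{\rho-\alpha-\alpha'+\gamma-1}$ leaves a series of the form $\sum_{n=0}^{\infty}(\cdots)\,x^{n}/n!$, which I would then match termwise against the definition $(\ref{Wright1})$: the three numerator Gammas become the upper parameters $(\rho,1)$, $(\rho+\gamma-\alpha-\alpha'-\beta,1)$, $(\rho+\beta'-\alpha',1)$ and the three denominator Gammas become the lower parameters $(\rho+\beta',1)$, $(\rho+\gamma-\alpha-\alpha',1)$, $(\rho+\gamma-\alpha'-\beta,1)$, all with dilation $1$, yielding the stated ${}_{3}\Psi_{3}$ evaluated at $x$.

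The step I expect to demand the most care is the justification of the term-by-term action of the operator, since this is where the hypotheses are genuinely used. The condition $\RM(\gamma)>0$ secures convergence of the defining integral $(\ref{1})$, while the restriction $\RM(\rho+n)>\max\{0,\RM(\alpha+\alpha'+\beta-\gamma),\RM(\alpha'-\beta')\}$ guarantees that Lemma \ref{lem-1} is applicable for every $n\ge 0$ and that all six Gamma factors are well defined. Uniform convergence of the monomial series on compact subsets, combined with the fact that the resulting ${}_{3}\Psi_{3}$ is entire (condition $(\ref{Wright2})$ holds here since $\sum_{j}\beta_{j}-\sum_{i}\alpha_{i}=3-3=0>-1$), legitimises moving the operator past the summation; once that is in place, the remaining identification of the Gamma quotient with the Wright parameters is purely routine bookkeeping.
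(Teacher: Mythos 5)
Your proposal is correct and follows essentially the same route as the paper: the paper merely writes $e^{t}=S_{-1/2}(t)$ and expands via $(\ref{3})$, where $\Gamma(1/2)=\sqrt{\pi}$ and the factor $\Gamma\left(\frac{n+1}{2}\right)$ cancels, so its series collapses to your $\sum_{n\geq 0}t^{n}/n!$ before Lemma~\ref{lem-1} is applied termwise. Both derivations produce exactly the three-numerator, three-denominator Gamma quotient, i.e.\ a genuine ${}_{3}\Psi_{3}$; the extra upper entry $\left(1-\rho-\beta^{\prime},1\right)$ printed in the theorem statement is a typographical artifact that neither argument yields.
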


\begin{proof}
From equation 1 and the definition of Bessel Struve kernel function, we have%
\begin{align*}
\left( I_{0,+}^{\alpha ,\alpha ^{\prime },\beta ,\beta ^{\prime },\gamma
}t^{\rho -1}e^{t}\right) (x)& = \left( I_{0,+}^{\alpha ,\alpha ^{\prime
},\beta ,\beta ^{\prime },\gamma }t^{\rho -1}\sum\limits_{n=0}^{\infty }%
\frac{\Gamma \left( -1/2+1\right) \Gamma \left( \frac{n+1}{2}\right) }{\sqrt{%
\pi }n!\Gamma \left( n/2-1/2+1\right) }t^{n}\right) \left( x\right) \\
&=\sum\limits_{n=0}^{\infty }\frac{\Gamma \left( 1/2\right) \Gamma \left( 
\frac{n+1}{2}\right) }{\sqrt{\pi }n!\Gamma \left( \frac{n+1}{2}\right) }%
\left( I_{0,+}^{\alpha ,\alpha ^{\prime },\beta ,\beta ^{\prime },\gamma
}t^{\rho +n-1}\right) \left( x\right)
\end{align*}
This together with Lemma $\ref{lem-1}$ yields 
\begin{align*}
\left( I_{0,+}^{\alpha ,\alpha ^{\prime },\beta ,\beta ^{\prime },\gamma
}t^{\rho -1}e^{t}\right) (x)& =x^{\rho -\alpha -\alpha ^{^{\prime }}+\gamma
-1}\sum\limits_{n=0}^{\infty }\frac{x^{n}}{n!}\Gamma \left[ 
\begin{array}{c}
\rho +n,\rho +n+\gamma -\alpha -\alpha ^{^{\prime }}-\beta ,\rho +n+\beta
^{^{\prime }}-\alpha ^{^{\prime }} \\ 
\rho +n+\beta ^{^{\prime }},\rho +n+\gamma -\alpha -\alpha ^{^{\prime
}},\rho +n+\gamma -\alpha ^{^{\prime }}-\beta%
\end{array}%
\right]
\end{align*}
which is the desired result.
\end{proof}

\begin{theorem}
Let $\alpha ,\alpha ^{\prime },\beta ,\beta ^{\prime },\gamma ,\rho
,p,b,c,\lambda \in \mathbb{C}$ . Suppose that $\RM{(\gamma)}>0$ and $%
\RM{(\rho+ n)}>\max \{0,\RM{(\alpha+\alpha'+\beta-\gamma)},%
\RM{(\alpha'-\beta')}\}.$ Then

\begin{align*}
& \left( I_{0,+}^{\alpha ,\alpha ^{\prime },\beta ,\beta ^{\prime },\gamma
}t^{\rho -1}\left( \frac{-1+e^{t}}{t}\right) \right) (x)\quad =x^{\rho
-\alpha -\alpha ^{^{\prime }}+\gamma -1} \\
& \times _{4}\Psi _{4}\left[ 
\begin{array}{c}
\left( \frac{1}{2},\frac{1}{2}\right) ,\left( \rho ,1\right) ,\left( \rho
+\gamma -\alpha -\alpha ^{^{\prime }}-\beta ,1\right) ,\left( \rho +\beta
^{^{\prime }}-\alpha ^{^{\prime }},1\right) ; \\ 
\left( \frac{1}{2},\frac{3}{2}\right) ,\left( \rho +\beta ^{^{\prime
}},1\right) ,\left( \rho +\gamma -\alpha -\alpha ^{^{\prime }},1\right)
,\left( \rho +\gamma -\alpha ^{^{\prime }}-\beta ,1\right)%
\end{array}%
\left\vert x\right. \right]
\end{align*}
\end{theorem}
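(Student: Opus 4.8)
The plan is to observe that, by the representation (\ref{e2}), the integrand factor $\frac{-1+e^{t}}{t}$ is exactly the Bessel--Struve kernel $S_{1/2}(t)$; consequently this statement is the specialization $\nu=\tfrac12,\ \lambda=1$ of Theorem \ref{thm-1}, and the argument runs in complete parallel to that proof. One may either invoke Theorem \ref{thm-1} directly or repeat its computation, which I outline below.

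First I would insert the power series (\ref{3}) with $\alpha=\tfrac12$,
\[
\frac{-1+e^{t}}{t}=S_{1/2}(t)=\sum_{n=0}^{\infty}\frac{\Gamma(3/2)\,\Gamma\!\left(\tfrac{n+1}{2}\right)}{\sqrt{\pi}\,n!\,\Gamma\!\left(\tfrac{n}{2}+\tfrac{3}{2}\right)}\,t^{n},
\]
and simplify the constant using $\Gamma(3/2)/\sqrt{\pi}=\tfrac12$. Under the stated hypotheses on $\RM{(\gamma)}$ and $\RM{(\rho+n)}$, which guarantee that Lemma \ref{lem-1} applies to each monomial, I would interchange the operator $I_{0,+}^{\alpha,\alpha',\beta,\beta',\gamma}$ with the summation.

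Next I would apply Lemma \ref{lem-1} with $\rho$ replaced by $\rho+n$ to each power $t^{\rho+n-1}$, obtaining for every $n$ a quotient of six gamma functions multiplied by $x^{\rho+n-\alpha-\alpha'+\gamma-1}$. Factoring out $x^{\rho-\alpha-\alpha'+\gamma-1}$ and pairing the residual $x^{n}$ with $n!$ produces the factor $x^{n}/n!$ required by the Wright series. Finally I would read off the parameters against definition (\ref{Wright1}): the numerator gammas $\Gamma(\tfrac12+\tfrac12 n)$, $\Gamma(\rho+n)$, $\Gamma(\rho+n+\gamma-\alpha-\alpha'-\beta)$, $\Gamma(\rho+n+\beta'-\alpha')$ give the four upper pairs, and the denominator gammas $\Gamma(\tfrac32+\tfrac12 n)$, $\Gamma(\rho+n+\beta')$, $\Gamma(\rho+n+\gamma-\alpha-\alpha')$, $\Gamma(\rho+n+\gamma-\alpha'-\beta)$ give the four lower pairs, yielding the asserted ${}_{4}\Psi_{4}$.

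The one point requiring care---and the sole structural difference from the preceding $e^{t}$ theorem---is that here the half-integer gammas $\Gamma(\tfrac{n+1}{2})$ and $\Gamma(\tfrac{n}{2}+\tfrac32)$ do \emph{not} cancel, since their arguments differ by one; thus both pairs $(\tfrac12,\tfrac12)$ and $(\tfrac32,\tfrac12)$ survive, which is precisely why the output is a ${}_{4}\Psi_{4}$ rather than the ${}_{3}\Psi_{3}$ of the exponential case. I expect no genuine obstacle; the only things to verify at the end are the correct recording of the leading constant $\Gamma(3/2)/\sqrt{\pi}=\tfrac12$ and of the lower half-integer pair $(\tfrac32,\tfrac12)$ in the final Wright symbol.
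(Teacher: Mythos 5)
Your proposal is correct and follows essentially the same route as the paper's own proof: expand $\frac{-1+e^{t}}{t}=S_{1/2}(t)$ via the series (\ref{3}), interchange the operator with the summation, and apply Lemma \ref{lem-1} with $\rho$ replaced by $\rho+n$ before reading off the Wright parameters. You are also right to single out the two delicate points at the end: the computation yields the prefactor $\Gamma(3/2)/\sqrt{\pi}=\tfrac12$ and the lower pair $\left(\tfrac32,\tfrac12\right)$, whereas the theorem as printed drops the $\tfrac12$ and records the pair as $\left(\tfrac12,\tfrac32\right)$, so the displayed formula should be corrected accordingly.
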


\begin{proof}
From equation 1 and the definition of Bessel Struve kernel function, we have%
\begin{align*}
\left( I_{0,+}^{\alpha ,\alpha ^{\prime },\beta ,\beta ^{\prime },\gamma
}t^{\rho -1}\left( \frac{-1+e^{t}}{t}\right) \right) (x) &=\left(
I_{0,+}^{\alpha ,\alpha ^{\prime },\beta ,\beta ^{\prime },\gamma }t^{\rho
-1}\sum\limits_{n=0}^{\infty }\frac{\Gamma \left( 1/2+1\right) \Gamma \left( 
\frac{n+1}{2}\right) }{\sqrt{\pi }n!\Gamma \left( n/2+1/2+1\right) }%
t^{n}\right) \left( x\right) \\
&=\sum\limits_{n=0}^{\infty }\frac{\Gamma \left( 3/2\right) \Gamma \left( 
\frac{n+1}{2}\right) }{\sqrt{\pi }n!\Gamma \left( \frac{n+3}{2}\right) }%
\left( I_{0,+}^{\alpha ,\alpha ^{\prime },\beta ,\beta ^{\prime },\gamma
}t^{\rho +n-1}\right) \left( x\right)
\end{align*}
Again using Lemma $\ref{lem-1}$, we can conclude that 
\begin{align*}
&\left( I_{0,+}^{\alpha ,\alpha ^{\prime },\beta ,\beta ^{\prime },\gamma
}t^{\rho -1}\left( \frac{-1+e^{t}}{t}\right) \right) (x) \\
&=\sum\limits_{n=0}^{\infty }\frac{\Gamma \left( \frac{n+1}{2}\right) }{%
\Gamma \left( \frac{n+3}{2}\right) }\Gamma \left[ 
\begin{array}{c}
\rho +n,\rho +n+\gamma -\alpha -\alpha ^{^{\prime }}-\beta ,\rho +n+\beta
^{^{\prime }}-\alpha ^{^{\prime }} \\ 
\rho +n+\beta ^{^{\prime }},\rho +n+\gamma -\alpha -\alpha ^{^{\prime
}},\rho +n+\gamma -\alpha ^{^{\prime }}-\beta%
\end{array}%
\right]\frac{x^{x^{\rho +n-\alpha -\alpha ^{^{\prime }}+\gamma -1}}}{2n!} \\
&=\frac{x^{\rho -\alpha -\alpha ^{^{\prime }}+\gamma -1}}{2}\times
\sum\limits_{n=0}^{\infty }\frac{\Gamma \left( \frac{n+1}{2}\right) }{\Gamma
\left( \frac{n+3}{2}\right) }\Gamma \left[ 
\begin{array}{c}
\rho +n,\rho +n+\gamma -\alpha -\alpha ^{^{\prime }}-\beta ,\rho +n+\beta
^{^{\prime }}-\alpha ^{^{\prime }} \\ 
\rho +n+\beta ^{^{\prime }},\rho +n+\gamma -\alpha -\alpha ^{^{\prime
}},\rho +n+\gamma -\alpha ^{^{\prime }}-\beta%
\end{array}%
\right] \frac{x^{n}}{n!}
\end{align*}
hence the conclusion.
\end{proof}

\subsection{Relation between Bessel Struve kernel function and Bessel and
Struve function of \ first kind}

In this subsection we show the relation between $S_{\alpha }\left( x\right) $
and Bessel function $I_{v}\left( x\right) $ and Struve function $L_{v}\left(
x\right) $ by choosing particular values of $\alpha $

\begin{equation}
S_{0}\left( x\right) =I_{0}\left( x\right) +L_{0}\left( x\right) ,
\label{eqn-r1}
\end{equation}

\begin{equation}
S_{1}\left( x\right) =\frac{2I_{1}\left( x\right) +L_{1}\left( x\right) }{x},
\label{eqn-r2}
\end{equation}

then we derive the Marichev Saigo Maeda operator representation of special
cases

\begin{theorem}
Let $\alpha ,\alpha ^{\prime },\beta ,\beta ^{\prime },\gamma ,\rho
,p,b,c,\lambda \in \mathbb{C}$ . Suppose that $\RM{(\gamma)}>0$ and $%
\RM{(\rho+ n)}>\max \{0,\RM{(\alpha+\alpha'+\beta-\gamma)},%
\RM{(\alpha'-\beta')}\}.$ Then

\begin{align*}
& \left( I_{0,+}^{\alpha ,\alpha ^{\prime },\beta ,\beta ^{\prime },\gamma
}t^{\rho -1}\left( I_{0}\left( t\right) +L_{0}\left( t\right) \right)
\right) (x) \\
&=\frac{x^{\rho -\alpha -\alpha ^{^{\prime }}+\gamma -1}}{\sqrt{\pi }}
\times _{4}\Psi _{4}\left[ 
\begin{array}{c}
\left( \frac{1}{2},\frac{1}{2}\right) ,\left( \rho ,1\right) ,\left( \rho
+\gamma -\alpha -\alpha ^{^{\prime }}-\beta ,1\right) ,\left( \rho +\beta
^{^{\prime }}-\alpha ^{^{\prime }},1\right) ; \\ 
\left( \frac{1}{2},1\right) ,\left( \rho +\beta ^{^{\prime }},1\right)
,\left( \rho +\gamma -\alpha -\alpha ^{^{\prime }},1\right) ,\left( \rho
+\gamma -\alpha ^{^{\prime }}-\beta ,1\right)%
\end{array}%
\left\vert x\right. \right]
\end{align*}
\end{theorem}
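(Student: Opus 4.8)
The plan is to recognize this as the special case $\nu=0$, $\lambda=1$ of the already-established Theorem~\ref{thm-1}. By the relation~\eqref{eqn-r1} the integrand $t^{\rho-1}\bigl(I_{0}(t)+L_{0}(t)\bigr)$ is identically $t^{\rho-1}S_{0}(t)$, which is exactly $t^{\rho-1}S_{\nu}(\lambda t)$ with $\nu=0$ and $\lambda=1$. So in principle one could simply substitute these values into the conclusion of Theorem~\ref{thm-1}. To keep the argument self-contained and in line with the other proofs, however, I would instead carry out the direct term-by-term computation and then identify the answer.

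First I would replace $I_{0}(t)+L_{0}(t)$ by $S_{0}(t)$ and expand it using the power series~\eqref{3} specialized to $\alpha=0$, $\lambda=1$, namely $S_{0}(t)=\sum_{n=0}^{\infty}\Gamma\!\left(\tfrac{n+1}{2}\right)t^{n}/\bigl(\sqrt{\pi}\,n!\,\Gamma\!\left(\tfrac{n}{2}+1\right)\bigr)$. Next I would interchange the operator $I_{0,+}^{\alpha,\alpha',\beta,\beta',\gamma}$ with the infinite sum; this is justified under the stated hypotheses $\RM(\gamma)>0$ and $\RM(\rho+n)>\max\{0,\RM(\alpha+\alpha'+\beta-\gamma),\RM(\alpha'-\beta')\}$, which ensure that Lemma~\ref{lem-1} applies to each monomial $t^{\rho+n-1}$ and that the resulting series converges.

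Applying Lemma~\ref{lem-1} with $\rho$ replaced by $\rho+n$ yields, for each $n$, the quotient of six Gamma functions in the $\Gamma[\cdots]$ notation together with the factor $x^{\rho+n-\alpha-\alpha'+\gamma-1}$. I would then factor out $x^{\rho-\alpha-\alpha'+\gamma-1}/\sqrt{\pi}$ and a single $x^{n}/n!$, leaving a ratio of Gamma functions whose arguments are all affine in $n$. Reading off the coefficients, $\Gamma\!\left(\tfrac{n+1}{2}\right)$ supplies the numerator pair $\left(\tfrac12,\tfrac12\right)$; the three upper Gamma factors of Lemma~\ref{lem-1} give $(\rho,1)$, $(\rho+\gamma-\alpha-\alpha'-\beta,1)$, $(\rho+\beta'-\alpha',1)$; and $\Gamma\!\left(\tfrac{n}{2}+1\right)$ together with the three lower Gamma factors give the four denominator pairs $\left(1,\tfrac12\right)$, $(\rho+\beta',1)$, $(\rho+\gamma-\alpha-\alpha',1)$, $(\rho+\gamma-\alpha'-\beta,1)$. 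By definition~\eqref{Wright1} this is precisely the ${}_{4}\Psi_{4}$ claimed (with $(1,\tfrac12)$ being the $\nu+1=1$ denominator parameter).

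The step needing the most care is the parameter bookkeeping in this identification: one must match each Gamma argument to a pair $(a_{i},\alpha_{i})$ or $(b_{j},\beta_{j})$ with the correct linear coefficient in $n$, noting in particular that the half-integer arguments $\tfrac{n+1}{2}$ and $\tfrac{n}{2}+1$ carry coefficient $\tfrac12$ rather than $1$. Everything else is the same routine computation already performed in the proofs of Theorems~\ref{thm-1} and~\ref{thm-2}, so no genuine obstacle remains once the series is written in the normalized form $\sum_{n}(\,\cdots\,)\,x^{n}/n!$.
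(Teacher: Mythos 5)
Your proposal is correct and follows essentially the same route as the paper: expand the integrand as the power series of $S_{0}(t)$, interchange the operator with the sum, apply Lemma~\ref{lem-1} with $\rho$ replaced by $\rho+n$, and reassemble the resulting Gamma quotients into a ${}_{4}\Psi_{4}$ (your opening observation that this is just Theorem~\ref{thm-1} with $\nu=0$, $\lambda=1$ is an equally valid shortcut the paper does not take). Note that the denominator pair $\left(1,\tfrac12\right)$ you obtain from $\Gamma\!\left(\tfrac{n}{2}+1\right)$ is indeed what the computation yields, whereas the theorem as printed shows $\left(\tfrac12,1\right)$ — an apparent typographical slip in the statement, not an error in your argument.
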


\begin{proof}
From equation \ref{1} and the definition of Bessel Struve kernel function,
we have%
\begin{eqnarray*}
\left( I_{0,+}^{\alpha ,\alpha ^{\prime },\beta ,\beta ^{\prime },\gamma
}t_{0}^{\rho -1}\left( I_{0}\left( t\right) +L_{0}\left( t\right) \right)
\right) (x) &=&\left( I_{0,+}^{\alpha ,\alpha ^{\prime },\beta ,\beta
^{\prime },\gamma }t^{\rho -1}\sum\limits_{n=0}^{\infty }\frac{\Gamma \left(
1\right) \Gamma \left( \frac{n+1}{2}\right) }{\sqrt{\pi }n!\Gamma \left(
n/2+1\right) }t^{n}\right) \left( x\right) \\
&=&\sum\limits_{n=0}^{\infty }\frac{\Gamma \left( \frac{n+1}{2}\right) }{%
\sqrt{\pi }n!\Gamma \left( \frac{n}{2}+1\right) }\left( I_{0,+}^{\alpha
,\alpha ^{\prime },\beta ,\beta ^{\prime },\gamma }t^{\rho +n-1}\right)
\left( x\right)
\end{eqnarray*}

Using Lemma $\ref{lem-1}$, we obtain

\begin{align*}
&\left( I_{0,+}^{\alpha ,\alpha ^{\prime },\beta ,\beta ^{\prime },\gamma
}t_{0}^{\rho -1}\left( I_{0}\left( t\right) +L_{0}\left( t\right) \right)
\right) (x) \\
&=\sum\limits_{n=0}^{\infty }\frac{\Gamma \left( \frac{n+1}{2}\right)
x^{\rho +n-\alpha -\alpha ^{^{\prime }}+\gamma -1}}{\sqrt{\pi }n! \Gamma
\left( \frac{n}{2}+1\right) }\Gamma \left[ 
\begin{array}{c}
\rho +n,\rho +n+\gamma -\alpha -\alpha ^{^{\prime }}-\beta ,\rho +n+\beta
^{^{\prime }}-\alpha ^{^{\prime }} \\ 
\rho +n+\beta ^{^{\prime }},\rho +n+\gamma -\alpha -\alpha ^{^{\prime
}},\rho +n+\gamma -\alpha ^{^{\prime }}-\beta%
\end{array}%
\right] \\
&=\frac{x^{\rho -\alpha -\alpha ^{^{\prime }}+\gamma -1}}{\sqrt{\pi }}%
\sum\limits_{n=0}^{\infty }\frac{\Gamma \left( \frac{n+1}{2}\right) }{\Gamma
\left( \frac{n}{2}+1\right) }\frac{x^{n}}{n!}\Gamma \left[ 
\begin{array}{c}
\rho +n,\rho +n+\gamma -\alpha -\alpha ^{^{\prime }}-\beta ,\rho +n+\beta
^{^{\prime }}-\alpha ^{^{\prime }} \\ 
\rho +n+\beta ^{^{\prime }},\rho +n+\gamma -\alpha -\alpha ^{^{\prime
}},\rho +n+\gamma -\alpha ^{^{\prime }}-\beta%
\end{array}%
\right]
\end{align*}
which is the desired result.
\end{proof}

\begin{theorem}
Let $\alpha ,\alpha ^{\prime },\beta ,\beta ^{\prime },\gamma ,\rho
,p,b,c,\lambda \in \mathbb{C}$ . Suppose that $\RM{(\gamma)}>0$ and $%
\RM{(\rho+ n)}>\max \{0,\RM{(\alpha+\alpha'+\beta-\gamma)},%
\RM{(\alpha'-\beta')}\}.$ Then

\begin{align*}
& \left( I_{0,+}^{\alpha ,\alpha ^{\prime },\beta ,\beta ^{\prime },\gamma
}t^{\rho -1}\frac{\left( 2I_{1}\left( t\right) +L_{1}\left( t\right) \right)}{t}
\right) (x) \\
&=\frac{x^{\rho -\alpha -\alpha ^{^{\prime }}+\gamma -1}}{\sqrt{\pi }}
\times _{4}\Psi _{4}\left[ 
\begin{array}{c}
\left( \frac{1}{2},\frac{1}{2}\right) ,\left( \rho ,1\right) ,\left( \rho
+\gamma -\alpha -\alpha ^{^{\prime }}-\beta ,1\right) ,\left( \rho +\beta
^{^{\prime }}-\alpha ^{^{\prime }},1\right) ; \\ 
\left( \frac{1}{2},1\right) ,\left( \rho +\beta ^{^{\prime }},1\right)
,\left( \rho +\gamma -\alpha -\alpha ^{^{\prime }},1\right) ,\left( \rho
+\gamma -\alpha ^{^{\prime }}-\beta ,1\right)%
\end{array}%
\left\vert x\right. \right]
\end{align*}
\end{theorem}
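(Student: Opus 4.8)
The plan is to mirror the proof of Theorem~\ref{thm-1}, since the relation (\ref{eqn-r2}) exhibits the integrand as the Bessel--Struve kernel $S_{1}$ evaluated at $t$. First I would use (\ref{eqn-r2}) to rewrite $(2I_{1}(t)+L_{1}(t))/t$ as $S_{1}(t)$ and then expand it by the power series (\ref{3}) with $\nu=1$. Since $\Gamma(\nu+1)=\Gamma(2)=1$ and $\Gamma(n/2+\nu+1)=\Gamma(n/2+2)$, the integrand becomes
\[
t^{\rho-1}\,\frac{2I_{1}(t)+L_{1}(t)}{t}=\sum_{n=0}^{\infty}\frac{\Gamma\!\left(\tfrac{n+1}{2}\right)}{\sqrt{\pi}\,n!\,\Gamma\!\left(\tfrac{n}{2}+2\right)}\,t^{\rho+n-1}.
\]

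Next I would interchange the operator $I_{0,+}^{\alpha,\alpha',\beta,\beta',\gamma}$ with the summation. This step is legitimate because the Bessel--Struve kernel is entire, so its series converges uniformly on compact sets, and the hypothesis $\RM(\rho+n)>\max\{0,\RM(\alpha+\alpha'+\beta-\gamma),\RM(\alpha'-\beta')\}$ ensures that Lemma~\ref{lem-1} is applicable to each monomial $t^{\rho+n-1}$. Replacing $\rho$ by $\rho+n$ in Lemma~\ref{lem-1} evaluates every term as a quotient of six Gamma functions times $x^{\rho+n-\alpha-\alpha'+\gamma-1}$. Factoring out $x^{\rho-\alpha-\alpha'+\gamma-1}/\sqrt{\pi}$ and recombining $x^{n}/n!$ with the $n$-dependent Gamma factors then leaves a single power series in $x$.

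Finally I would recognise this series as a generalized Wright function by reading its parameters off the definition (\ref{Wright1}): the factor $\Gamma(\tfrac{n+1}{2})=\Gamma(\tfrac12+\tfrac12 n)$ contributes the numerator pair $(\tfrac12,\tfrac12)$, the three numerator Gamma factors from Lemma~\ref{lem-1} contribute $(\rho,1)$, $(\rho+\gamma-\alpha-\alpha'-\beta,1)$ and $(\rho+\beta'-\alpha',1)$, while $\Gamma(\tfrac{n}{2}+2)$ together with the three denominator Gamma factors of Lemma~\ref{lem-1} furnish the four lower parameters, yielding the asserted ${}_{4}\Psi_{4}$. The computation is essentially routine; the only points needing care are the bookkeeping of the half-integer parameters arising from $\Gamma(\tfrac{n+1}{2})$ and $\Gamma(\tfrac{n}{2}+2)$, and verifying that condition (\ref{Wright2}) holds for the resulting ${}_{4}\Psi_{4}$, so that it is entire and the term-by-term manipulation is fully justified.
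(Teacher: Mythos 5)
Your proposal follows essentially the same route as the paper: rewrite $(2I_{1}(t)+L_{1}(t))/t$ as $S_{1}(t)$ via (\ref{eqn-r2}), expand by (\ref{3}) with $\nu=1$, interchange the operator with the summation, apply Lemma~\ref{lem-1} with $\rho$ replaced by $\rho+n$, and repackage the resulting series as a generalized Wright function. The only caveat (present in the paper's own proof as well) is that the denominator factor $\Gamma\left(\tfrac{n}{2}+2\right)$ corresponds to the Wright parameter pair $\left(2,\tfrac{1}{2}\right)$ rather than the pair $\left(\tfrac{1}{2},1\right)$ printed in the statement, so the final identification of the ${}_{4}\Psi_{4}$ parameters should be adjusted accordingly.
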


\begin{proof}
From \eqref{1} and the definition of Bessel Struve kernel function, we have 
\begin{align*}
\left( I_{0,+}^{\alpha ,\alpha ^{\prime },\beta ,\beta ^{\prime },\gamma
}t^{\rho -1}\left( \frac{2I_{1}\left( t\right) +L_{1}\left( t\right) }{t}%
\right) \right) (x)& =\left( I_{0,+}^{\alpha ,\alpha ^{\prime },\beta ,\beta
^{\prime },\gamma }t^{\rho -1}\sum\limits_{n=0}^{\infty }\frac{\Gamma \left(
2\right) \Gamma \left( \frac{n+1}{2}\right) }{\sqrt{\pi }n!\Gamma \left(
n/2+2\right) }t^{n}\right) \left( x\right) \\
&=\frac{\Gamma \left( 2\right) }{\sqrt{\pi }}\sum\limits_{n=0}^{\infty }%
\frac{\Gamma \left( \frac{n+1}{2}\right) }{n!\Gamma \left( \frac{n}{2}%
+2\right) }\left( I_{0,+}^{\alpha ,\alpha ^{\prime },\beta ,\beta ^{\prime
},\gamma }t^{\rho +n-1}\right) \left( x\right).
\end{align*}
Using Lemma $\ref{lem-1}$, we obtain

\begin{eqnarray*}
&=&\frac{\Gamma \left( 2\right) }{\sqrt{\pi }}\sum\limits_{n=0}^{\infty }%
\frac{\Gamma \left( \frac{n+1}{2}\right) }{n!\Gamma \left( \frac{n}{2}%
+1\right) }\Gamma \left[ 
\begin{array}{c}
\rho +n,\rho +n+\gamma -\alpha -\alpha ^{^{\prime }}-\beta ,\rho +n+\beta
^{^{\prime }}-\alpha ^{^{\prime }} \\ 
\rho +n+\beta ^{^{\prime }},\rho +n+\gamma -\alpha -\alpha ^{^{\prime
}},\rho +n+\gamma -\alpha ^{^{\prime }}-\beta%
\end{array}%
\right] \\
&&\times \frac{x^{x^{\rho +n-\alpha -\alpha ^{^{\prime }}+\gamma -1}}}{n!}
\end{eqnarray*}%
\begin{equation*}
=\frac{x^{\rho -\alpha -\alpha ^{^{\prime }}+\gamma -1}}{\sqrt{\pi }}%
\sum\limits_{n=0}^{\infty }\frac{\Gamma \left( \frac{n+1}{2}\right) }{\Gamma
\left( \frac{n}{2}+1\right) }\frac{x^{n}}{n!}\Gamma \left[ 
\begin{array}{c}
\rho +n,\rho +n+\gamma -\alpha -\alpha ^{^{\prime }}-\beta ,\rho +n+\beta
^{^{\prime }}-\alpha ^{^{\prime }} \\ 
\rho +n+\beta ^{^{\prime }},\rho +n+\gamma -\alpha -\alpha ^{^{\prime
}},\rho +n+\gamma -\alpha ^{^{\prime }}-\beta%
\end{array}%
\right]
\end{equation*}

which is the desired result.
\end{proof}

\section{Pathway Fractional Integration of Bessel Struve Kernel function}

By considering the idea of Mathai \cite{Mathai-pathway}, Nair \cite{Nair-1}%
,introduced a pathway fractional integral operator and developed further by
Mathai and Haubold \cite{Mathai-Habold-1} ,\cite{Mathai-Habold-2} is defined
as follows:

Let $\ f\left( x\right) \in L\left( a,b\right) ,\eta \in C,R\left( \eta
\right) >0,a>0$ and the pathway parameter $\alpha <1$ as \cite%
{Praveen-pathway}, then

\begin{equation}
\left( P_{0+}^{\left( \eta ,\alpha \right) }f\right) \left( x\right)
=x^{\eta }\int\limits_{0}^{\left[ \frac{x}{a\left( 1-\alpha \right) }\right]
}1-\left[ \frac{a\left( 1-\alpha \right) t}{x}\right] ^{\frac{\eta }{\left(
1-\alpha \right) }}f\left( t\right) dt  \label{eqn-path-1}
\end{equation}

For a real scalar $\alpha$, the pathway model for scalar random variables is
represented by the following probability density function (p.d.f.):

\begin{equation}
f\left( x\right) =c\left\vert x\right\vert ^{\gamma -1}\left[ 1-a\left(
1-\alpha \right) \left\vert x\right\vert ^{\delta }\right] ^{\frac{\beta }{%
\left( 1-\alpha \right) }}  \label{eqn-path-2}
\end{equation}

provided that $-\infty <x<\infty ,\delta >0,\beta \geq 0,\left[ 1-a\left(
1-\alpha \right) \left\vert x\right\vert ^{\delta }\right] >0,$ and $\gamma
>0$ where c is the normalizing constant and $\alpha $ is called the pathway
parameter. For real $\alpha $ normalizing constant as follows: 
\begin{align*}
c =\left\{ 
\begin{array}{ccc}
\frac{1}{2}\frac{\delta \left[ a\left( 1-\alpha \right) \right] ^{\frac{%
\gamma }{\delta }}\Gamma \left( \frac{\gamma }{\delta }+\frac{\beta }{%
1-\alpha }+1\right) }{\Gamma \left( \frac{\gamma }{\delta }\right) \Gamma
\left( \frac{\beta }{1-\alpha }+1\right) }, & \text{for } & \alpha <1 \\ 
\frac{1}{2}\frac{\delta \left[ a\left( 1-\alpha \right) \right] ^{\frac{
\gamma }{\delta }}\Gamma \left( \frac{\beta }{\alpha -1}\right) }{\Gamma
\left( \frac{\gamma }{\delta }\right) \Gamma \left( \frac{\beta }{\alpha -1}%
- \frac{\gamma }{\delta }\right) }, & \text{for } & \frac{1}{1-\alpha }-%
\frac{\gamma }{\delta }>0,\alpha >1 \\ 
\frac{1}{2}\frac{\left( a\beta \right) ^{\frac{\gamma }{\delta }}}{\Gamma
\left( \frac{\gamma }{\delta }\right) }, &  & \alpha \rightarrow 1%
\end{array}
\right.
\end{align*}

Note that for $\alpha <1$ it is a finite range density with $\left[
1-a\left( 1-\alpha \right) \left\vert x\right\vert ^{\delta }\right] >0$ and
\ ($\ref{eqn-path-2}$) remains in the extended generalized type-1 beta
family \ . The pathway density in ($\ref{eqn-path-2}$), for $\alpha < 1$,
includes the extended type-1 beta density, the triangular density, the
uniform density and many other p.d.f'.s .\cite{Praveen-pathway}.For instance
, $\alpha >1$ gives

\begin{equation}
f\left( x\right) =c\left\vert x\right\vert ^{\gamma -1}\left[ 1+a\left(
1-\alpha \right) \left\vert x\right\vert ^{\delta }\right] ^{-\frac{\beta }{%
\left( 1-\alpha \right) }}  \label{eqn-path-3}
\end{equation}%
provided that $-\infty <x<\infty ,\delta >0,\beta \geq 0,$ and $\alpha >0$
which is the extended generalized type-2 beta model for real x. It includes
the type-2 beta density, the F density, the Student-t density, the Cauchy
density and many more.The composition of the integral transform operator ($%
\ref{eqn-path-1}$) with the product of generalized Bessel function of the
first kind is given in \cite{Praveen-pathway}.

The purpose of this work is to investigate the composition formula of
integral transform operator due to Nair, which is expressed in terms of the
generalized Wright hypergeometric function, by inserting the generalized
Struve function of the first kind $S_{\alpha }\left( \lambda z\right) $
which is defined in the equation ($\ref{3}$) . The results given in this
section are based on the preliminary assertions giving by composition
formula of pathway fractional integral ($\ref{eqn-path-1}$) with a power
function.

\begin{lemma}
\label{lem-3} \cite{Nair-1} Let $\eta \in C$, $Re\left( \eta \right) >0,$ $%
\beta \in C$ and $\alpha <1. If $ $\RM\left( \beta \right) >0, $ and $\RM
\left( \frac{\eta }{1-\alpha }\right) >-1,$  then

\begin{equation}
\left\{ P_{0+}^{\left( \eta ,\alpha \right) }\left[ t^{\beta -1}\right]
\right\} \left( x\right) =\frac{x^{\eta +\beta }}{\left[ a\left( 1-\alpha
\right) \right] ^{\beta }}\frac{\Gamma \left( \beta \right) \Gamma \left( 1+%
\frac{\eta }{1-\alpha }\right) }{\Gamma \left( 1+\frac{\eta }{1-\alpha }%
+\beta \right) }  \label{eqn-path-2.1}
\end{equation}

The pathway fractional integration of the Bessel Struve Kernel function of
the first kind is given by the following result.
\end{lemma}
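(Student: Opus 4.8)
The plan is to substitute the monomial $f(t)=t^{\beta-1}$ directly into the definition (\ref{eqn-path-1}) of the pathway operator and to reduce the resulting finite integral to an Euler Beta integral by a single linear change of variable. Because $\alpha<1$, the upper limit $x/[a(1-\alpha)]$ is finite and positive, so throughout the computation we integrate over a genuinely bounded interval and no tail estimates are required.

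First I would record the operator acting on the power function as
\[
\left\{P_{0+}^{(\eta,\alpha)}\left[t^{\beta-1}\right]\right\}(x)=x^{\eta}\int_{0}^{\frac{x}{a(1-\alpha)}}\left[1-\frac{a(1-\alpha)t}{x}\right]^{\frac{\eta}{1-\alpha}}t^{\beta-1}\,dt.
\]
Next I would introduce the substitution $u=\dfrac{a(1-\alpha)t}{x}$, so that $t=\dfrac{xu}{a(1-\alpha)}$ and $dt=\dfrac{x}{a(1-\alpha)}\,du$, which carries the endpoints $t=0$ and $t=x/[a(1-\alpha)]$ to $u=0$ and $u=1$ respectively. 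After substituting and factoring out the powers of $x$ and of $a(1-\alpha)$, this becomes
\[
\left\{P_{0+}^{(\eta,\alpha)}\left[t^{\beta-1}\right]\right\}(x)=\frac{x^{\eta+\beta}}{[a(1-\alpha)]^{\beta}}\int_{0}^{1}u^{\beta-1}(1-u)^{\frac{\eta}{1-\alpha}}\,du.
\]

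Finally I would identify the remaining integral as the Beta function $B\!\left(\beta,1+\frac{\eta}{1-\alpha}\right)$ and rewrite it through the relation $B(p,q)=\Gamma(p)\Gamma(q)/\Gamma(p+q)$, which produces exactly the right-hand side of (\ref{eqn-path-2.1}). I expect no genuine obstacle in this argument; the only point that needs care is the convergence of the Beta integral. The hypothesis $\RM(\beta)>0$ secures integrability at the endpoint $u=0$, while $\RM\!\left(\frac{\eta}{1-\alpha}\right)>-1$ secures integrability at $u=1$, and these are precisely the two restrictions imposed in the statement, with $\RM(\eta)>0$ ensuring that the prefactor and the substitution are well defined. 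Justifying the change of variable under these conditions completes the verification.
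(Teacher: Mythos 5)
Your computation is correct and complete: the single substitution $u=a(1-\alpha)t/x$ (well defined since $a>0$, $\alpha<1$, $x>0$ make the upper limit finite and positive) turns the pathway integral of $t^{\beta-1}$ into $\frac{x^{\eta+\beta}}{[a(1-\alpha)]^{\beta}}B\bigl(\beta,1+\tfrac{\eta}{1-\alpha}\bigr)$, and the Beta--Gamma relation gives exactly \eqref{eqn-path-2.1}; your identification of $\RM(\beta)>0$ and $\RM\bigl(\tfrac{\eta}{1-\alpha}\bigr)>-1$ as the precise convergence conditions at the two endpoints is also right. The paper itself offers no proof to compare against --- Lemma \ref{lem-3} is simply quoted from Nair \cite{Nair-1} --- so your argument serves as a self-contained verification, and it is in fact the standard derivation given in that reference. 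One small point worth flagging: as typeset in \eqref{eqn-path-1} the integrand reads $1-[\,a(1-\alpha)t/x\,]^{\eta/(1-\alpha)}$ rather than $[\,1-a(1-\alpha)t/x\,]^{\eta/(1-\alpha)}$; you have (correctly) worked with the latter, which is the intended definition of the pathway operator, but it is a typo in the source you are implicitly correcting rather than a literal reading of the displayed formula.
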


\begin{theorem}
Let $\eta ,\sigma ,p,b,c,\lambda $ $\in \mathbb{C}$ and $\alpha <1$ be such
that $Re\left( \eta \right) >0,Re\left( \sigma \right) >0,Re\left( \sigma
+n\right) >0$\text{ and }$Re\left( \frac{\eta }{1-\alpha }\right) >-1$ then
the following formula hold%
\begin{equation}
\begin{array}{c}
\left( P_{0+}^{\left( \eta ,\alpha \right) }\left[ t^{\sigma -1}S_{\nu
}\left( \lambda t\right) \right] \right) \left( x\right) =x^{\eta +\sigma }%
\frac{\Gamma \left( \nu +1\right) \mu \left( 1+\frac{\eta }{1-\alpha }%
\right) }{\sqrt{\pi }\left[ a\left( 1-\alpha \right) \right] ^{\sigma +p+1}}
\times _{2}\Psi _{2}\left[ 
\begin{array}{cc}
\left( \frac{1}{2},\frac{1}{2}\right) , & \left( \rho ,1\right) \\ 
\left( \nu +1,\frac{1}{2}\right) , & \left( 1+\frac{\eta }{1-\alpha }+\sigma
,1\right)%
\end{array}%
;\lambda x\right]%
\end{array}
\label{eqn-4}
\end{equation}

\begin{proof}
Applying ($\ref{3}$), and ($\ref{eqn-path-1}$) and changing the order of
integration and summation, we get%
\begin{eqnarray*}
\left( P_{0+}^{\left( \eta ,\alpha \right) }\left[ t^{\sigma -1}S_{\nu
}\left( \lambda t\right) \right] \right) \left( x\right) &=& \left(
P_{0+}^{\left( \eta ,\alpha \right) }\left[ t^{\sigma -1}\sum_{n=0}^{\infty }%
\frac{\lambda ^{n}\Gamma \left( \alpha +1\right) \Gamma \left( \left(
n+1\right) /2\right) }{\sqrt{\pi }n!\Gamma \left( n/2+\alpha +1\right) }t^{n}%
\right] \right) \left( x\right) \\
&=&\sum\limits_{k=0}^{\infty }\frac{\lambda ^{n}\Gamma \left( \nu +1\right)
\Gamma \left( \left( n+1\right) /2\right) }{\sqrt{\pi }n!\Gamma \left(
n/2+\nu +1\right) }\left( P_{0+}^{\left( \eta ,\alpha \right) }\left\{
t^{(n+\sigma )-1}\right\} \right) \left( x\right)
\end{eqnarray*}

Using the conditions mentioned in the statement of the theorem and $k\in
K_{0}$ ,$R\left( p+n\right) >0,Re\left( \frac{\eta }{1-\alpha }\right) >-1.$
Applying Lemma $\ref{lem-3}$ and using ($\ref{eqn-path-2.1}$) with $\beta $
replaced by $\sigma +n$, we get

\begin{eqnarray*}
\left( P_{0+}^{\left( \eta ,\alpha \right) }\left[ t^{\sigma -1}S_{\nu
}\left( \lambda t\right) \right] \right) \left(
x\right)&=&\sum\limits_{k=0}^{\infty }\frac{\lambda ^{n}\Gamma \left( \nu
+1\right) \Gamma \left( \left( n+1\right) /2\right) }{\sqrt{\pi }n!\Gamma
\left( n/2+\nu +1\right) }\frac{x^{\eta +\alpha }}{\left[ a\left( 1-\alpha
\right) ^{^{\sigma +n}}\right] } \\
&&\times \frac{\Gamma \left( \sigma +n\right) \Gamma \left( 1+\frac{\eta }{%
1-\alpha }\right) }{\Gamma \left( 1+\frac{\eta }{1-\alpha }+\sigma +n\right) 
} \\
&=&\frac{x^{\eta +\alpha }\Gamma \left( \nu +1\right) \Gamma \left( 1+\frac{%
\eta }{1-\alpha }\right) }{\sqrt{\pi }\left[ a\left( 1-\alpha \right)
^{\sigma }\right] } \\
&&\times \sum\limits_{k=0}^{\infty }\frac{\Gamma \left( \frac{n}{2}+\frac{1}{%
2}\right) \Gamma \left( \sigma +n\right) }{\Gamma \left( \frac{n}{2}+\nu
+1\right) \Gamma \left( 1+\frac{\eta }{1-\alpha }+\sigma +n\right) }\frac{%
\left( x\lambda \right) ^{n}}{n!}
\end{eqnarray*}

which gives the desired result
\end{proof}
\end{theorem}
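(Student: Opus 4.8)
The plan is to reproduce, in the pathway setting, the termwise strategy already used in Theorems~\ref{thm-1} and~\ref{thm-2}: expand the Bessel--Struve kernel as a power series, push the operator through the sum, and invoke the single-power formula---here Lemma~\ref{lem-3} rather than Lemmas~\ref{lem-1}--\ref{lem-2}. Concretely, I would first substitute the expansion $(\ref{3})$ to write
\[
t^{\sigma-1}S_{\nu}(\lambda t)=\sum_{n=0}^{\infty}\frac{\lambda^{n}\,\Gamma(\nu+1)\,\Gamma\!\left(\tfrac{n+1}{2}\right)}{\sqrt{\pi}\,n!\,\Gamma\!\left(\tfrac{n}{2}+\nu+1\right)}\,t^{(\sigma+n)-1},
\]
so that $P_{0+}^{(\eta,\alpha)}$ is applied to a series of pure powers $t^{(\sigma+n)-1}$.

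The second step is to interchange the operator $P_{0+}^{(\eta,\alpha)}$ with the infinite sum. Since $\alpha<1$, the integral in $(\ref{eqn-path-1})$ runs over the finite interval $[0,x/(a(1-\alpha))]$, and the series $(\ref{3})$ defines an entire function of its argument, hence converges uniformly on that compact range; this legitimises termwise integration. This is the one point I would argue explicitly rather than merely assert, as it is the only genuinely analytic (as opposed to purely formal) ingredient in the proof.

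Next I would apply $(\ref{eqn-path-2.1})$ with $\beta$ replaced by $\sigma+n$, valid under the stated hypotheses $\RM(\sigma+n)>0$ and $\RM\!\left(\tfrac{\eta}{1-\alpha}\right)>-1$. Each term then becomes
\[
\frac{x^{\eta+\sigma+n}}{[a(1-\alpha)]^{\sigma+n}}\,\frac{\Gamma(\sigma+n)\,\Gamma\!\left(1+\tfrac{\eta}{1-\alpha}\right)}{\Gamma\!\left(1+\tfrac{\eta}{1-\alpha}+\sigma+n\right)}.
\]
Pulling the $n$-independent factors $x^{\eta+\sigma}$, $\Gamma(\nu+1)$, $\Gamma\!\left(1+\tfrac{\eta}{1-\alpha}\right)$, $1/\sqrt{\pi}$ and $[a(1-\alpha)]^{-\sigma}$ out of the summation, and absorbing the remaining $[a(1-\alpha)]^{-n}$ into the variable, leaves the residual series
\[
\sum_{n=0}^{\infty}\frac{\Gamma\!\left(\tfrac{1}{2}+\tfrac{n}{2}\right)\,\Gamma(\sigma+n)}{\Gamma\!\left(\nu+1+\tfrac{n}{2}\right)\,\Gamma\!\left(1+\tfrac{\eta}{1-\alpha}+\sigma+n\right)}\,\frac{z^{n}}{n!},
\]
where $z=\lambda x/[a(1-\alpha)]$.

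Finally, comparing with the defining series $(\ref{Wright1})$, I would identify this as the ${}_{2}\Psi_{2}$ function with upper parameters $\left(\tfrac12,\tfrac12\right),(\sigma,1)$ and lower parameters $\left(\nu+1,\tfrac12\right),\left(1+\tfrac{\eta}{1-\alpha}+\sigma,1\right)$, which yields the asserted formula. Here the convergence criterion $(\ref{Wright2})$ reads $\left(\tfrac12+1\right)-\left(\tfrac12+1\right)=0>-1$, so the Wright function is entire and every step is justified. Beyond the interchange discussed above, the only care needed is the bookkeeping that matches the half-step shifts $\tfrac{n}{2}$ to the parameters of type $(\cdot,\tfrac12)$; the remaining gamma-function algebra is routine.
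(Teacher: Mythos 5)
Your proposal follows essentially the same route as the paper: expand $S_{\nu}(\lambda t)$ by the series $(\ref{3})$, interchange the pathway operator with the sum, apply Lemma~\ref{lem-3} with $\beta$ replaced by $\sigma+n$, and reassemble the result as a ${}_{2}\Psi_{2}$. In fact your version is slightly more careful than the printed one: you justify the interchange via uniform convergence on the finite integration range, and you correctly retain the factor $[a(1-\alpha)]^{-n}$ inside the series, so that the Wright function's argument is $\lambda x/[a(1-\alpha)]$ rather than the $\lambda x$ appearing in the paper's final display.
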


By considering the relations given in \ref{e1} and \ref{e2}, we obtain
various new integral formulas for Bessel Struve functions involving in the
Pathway fractional integration Operators:

\begin{theorem}
Let $\eta ,\sigma ,p,b,c$ $\in C$ and $\alpha <1$ such that $Re\left( \eta
\right) >0,Re\left( \sigma +n\right) >0$ and $Re\left( \frac{\eta }{1-\alpha 
}\right) >-1$ then the following formula hold%
\begin{eqnarray*}
\left( P_{0+}^{\left( \eta ,\alpha \right) }\left[ t^{\sigma -1}e^{t}\right]
\right) \left( x\right) = x^{\eta +\sigma }\frac{\Gamma \left( 1+\frac{\eta 
}{1-\alpha }\right) }{\left[ a\left( 1-\alpha \right) \right] ^{\sigma }}
\times _{1}\Psi _{1}\left[ 
\begin{array}{c}
\left( \sigma ,1\right) ; \\ 
\left( 1+\sigma +\frac{\eta }{1-\alpha },1\right) ;%
\end{array}%
\frac{x}{\left[ a\left( 1-\alpha \right) \right] }\right]
\end{eqnarray*}

\begin{proof}
Applying ($\ref{e1}$),using ($\ref{eqn-path-1}$) \ with the help of Lemma $%
\ref{lem-3}$ and changing the order of integration and summation, we get%
\begin{eqnarray*}
\left( P_{0+}^{\left( \eta ,\alpha \right) }\left[ t^{\sigma -1}e^{t}\right]
\right) \left( x\right) &=&\left( P_{0+}^{\left( \eta ,\alpha \right) }\left[
t^{\sigma -1}S_{-\frac{1}{2}}\left( \lambda t\right) \right] \right) \left(
x\right) \\
&=&\left( P_{0+}^{\left( \eta ,\alpha \right) }\left[ t^{\sigma
-1}\sum_{n=0}^{\infty }\frac{\Gamma \left( \frac{1}{2}\right) \Gamma \left( 
\frac{n+1}{2}\right) }{\sqrt{\pi }n!\Gamma \left( \frac{n+1}{2}\right) }%
\right] \right) \left( x\right) \\
&=&\sum_{n=0}^{\infty }\frac{1}{n!}\left( P_{0+}^{\left( \eta ,\alpha
\right) }\left( t^{\sigma +n-1}\right) \right) \left( x\right) \\
&=&\sum_{n=0}^{\infty }\frac{1}{n!}\frac{x^{\eta +\sigma +n}\Gamma \left(
\sigma +n\right) \Gamma \left( 1+\frac{\eta }{1-\alpha }\right) }{\left[
a\left( 1-\alpha \right) \right] ^{\sigma +n}\Gamma \left( 1+\frac{\eta }{%
1-\alpha }+\sigma +n\right) } \\
&=&\frac{\Gamma \left( 1+\frac{\eta }{1-\alpha }\right) }{\left[ a\left(
1-\alpha \right) \right] ^{\sigma }}x^{\eta +\sigma } \sum_{n=0}^{\infty }%
\frac{\Gamma \left( \sigma +n\right) }{\left( a\left( 1-\alpha \right)
\right) ^{n}\Gamma \left( 1+\frac{\eta }{1-\alpha }+\sigma +n\right) }\frac{%
x^{n}}{n!}
\end{eqnarray*}

which completes the proof of the theorem.
\end{proof}
\end{theorem}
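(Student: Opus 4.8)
The plan is to exploit the exponential representation $S_{-1/2}(x)=e^{x}$ recorded in (\ref{e1}), which recasts the integral as a special instance of the pathway transform of the Bessel--Struve kernel. First I would replace $e^{t}$ by its power series. One may either write $e^{t}=\sum_{n=0}^{\infty}t^{n}/n!$ directly, or use the expansion (\ref{3}) specialized to $\nu=-1/2$: there $\Gamma(1/2)=\sqrt{\pi}$ cancels the $\sqrt{\pi}$ in the denominator, while $\Gamma(\tfrac{n}{2}+\tfrac12)=\Gamma(\tfrac{n+1}{2})$ cancels the $\Gamma(\tfrac{n+1}{2})$ in the numerator, recovering the exponential series. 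Interchanging the pathway integral with the summation then reduces the left-hand side to $\sum_{n=0}^{\infty}\frac{1}{n!}\bigl(P_{0+}^{(\eta,\alpha)}t^{\sigma+n-1}\bigr)(x)$.

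Next I would apply Lemma \ref{lem-3} with $\beta$ replaced by $\sigma+n$, so that each summand becomes
\[
\frac{x^{\eta+\sigma+n}}{[a(1-\alpha)]^{\sigma+n}}\,\frac{\Gamma(\sigma+n)\,\Gamma\!\left(1+\tfrac{\eta}{1-\alpha}\right)}{\Gamma\!\left(1+\tfrac{\eta}{1-\alpha}+\sigma+n\right)}.
\]
Pulling out the factors independent of $n$, namely $x^{\eta+\sigma}$, $[a(1-\alpha)]^{-\sigma}$ and $\Gamma(1+\tfrac{\eta}{1-\alpha})$, leaves the series
\[
\sum_{n=0}^{\infty}\frac{\Gamma(\sigma+n)}{\Gamma\!\left(1+\tfrac{\eta}{1-\alpha}+\sigma+n\right)}\,\frac{1}{n!}\left(\frac{x}{a(1-\alpha)}\right)^{n},
\]
which, read against the definition (\ref{Wright1}), is precisely the ${}_{1}\Psi_{1}$ of the statement with numerator parameter $(\sigma,1)$, denominator parameter $(1+\sigma+\tfrac{\eta}{1-\alpha},1)$, and argument $x/[a(1-\alpha)]$. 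This completes the computation.

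The main obstacle is justifying the term-by-term integration. I would do this by observing that the hypotheses $\RM(\eta)>0$, $\RM(\sigma+n)>0$ and $\RM(\tfrac{\eta}{1-\alpha})>-1$ are exactly what Lemma \ref{lem-3} requires for each $\bigl(P_{0+}^{(\eta,\alpha)}t^{\sigma+n-1}\bigr)(x)$ to be defined, and that the resulting ${}_{1}\Psi_{1}$ converges. Indeed, here $\sum_{j}\beta_{j}-\sum_{i}\alpha_{i}=1-1=0>-1$, so by (\ref{Wright2}) the series is an entire function of its argument, which legitimizes the interchange of summation and integration on the entire range.

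Finally, a quicker route is available once the general $S_{\nu}$ pathway formula (\ref{eqn-4}) is in hand: simply set $\nu=-1/2$. In that ${}_{2}\Psi_{2}$ the lower pair $(\nu+1,\tfrac12)$ becomes $(\tfrac12,\tfrac12)$, which coincides with and cancels the upper pair $(\tfrac12,\tfrac12)$; the ${}_{2}\Psi_{2}$ collapses to the ${}_{1}\Psi_{1}$ above, recovering the claimed identity with no further calculation.
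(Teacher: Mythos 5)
Your proposal follows the paper's own argument essentially verbatim: expand $e^{t}=S_{-1/2}(t)$ via (\ref{3}) (with the stated cancellations reducing it to the exponential series), interchange summation and the pathway integral, apply Lemma \ref{lem-3} with $\beta$ replaced by $\sigma+n$, and reassemble the series as the ${}_{1}\Psi_{1}$. Your additional convergence justification via (\ref{Wright2}) and the remark that the result also drops out of the general formula (\ref{eqn-4}) at $\nu=-1/2$ are correct supplements, but the core route is identical to the paper's.
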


\begin{theorem}
\bigskip Let $\eta ,\sigma ,p,b,c$ $\in C$ and $\alpha <1$ such that $%
Re\left( \sigma +n\right) >0$ and $Re\left( \frac{\eta }{1-\alpha }\right)
>-1$ then the following formula hold%
\begin{eqnarray*}
\left( P_{0+}^{\left( \eta ,\alpha \right) }\left[ t^{\sigma -1}\frac{%
-1+e^{t}}{t}\right] \right) \left( x\right) = x^{\eta +\sigma }\frac{\Gamma
\left( 1+\frac{\eta }{1-\alpha }\right) }{2\left[ a\left( 1-\alpha \right) %
\right] ^{\sigma }} \times _{2}\Psi _{2}\left[ 
\begin{array}{c}
\left( \frac{1}{2},\frac{1}{2}\right) ,\left( \sigma ,1\right) ; \\ 
\left( \frac{1}{2},\frac{1}{2}\right) ,\left( 1+\sigma +\frac{\eta }{%
1-\alpha },1\right) ;%
\end{array}%
\frac{x\lambda }{\left[ a\left( 1-\alpha \right) \right] ^{{}}}\right]
\end{eqnarray*}

\begin{proof}
Applying ($\ref{e2}$),using ($\ref{3}$) , ($\ref{eqn-path-1}$) and Lemma $1$
and changing the order of integration and summation, we get%
\begin{eqnarray*}
\left( P_{0+}^{\left( \eta ,\alpha \right) }\left[ t^{\sigma -1}\frac{%
-1+e^{t}}{2}\right] \right) \left( x\right) &=&\left( P_{0+}^{\left( \eta
,\alpha \right) }\left[ t^{\sigma -1}S_{\frac{1}{2}}\left( \lambda t\right) %
\right] \right) \left( x\right) \\
&=&\left( P_{0+}^{\left( \eta ,\alpha \right) }\left[ t^{\sigma
-1}\sum_{n=0}^{\infty }\frac{\Gamma \left( \frac{3}{2}\right) \Gamma \left( 
\frac{n+1}{2}\right) }{\sqrt{\pi }n!\Gamma \left( \frac{n+3}{2}\right) }%
\right] \right) \left( x\right) \\
&=&\sum_{n=0}^{\infty }\frac{\Gamma \left( \frac{3}{2}\right) \Gamma \left( 
\frac{n+1}{2}\right) \lambda ^{n}}{\sqrt{\pi }n!\Gamma \left( \frac{n+3}{2}%
\right) }\left( P_{0+}^{\left( \eta ,\alpha \right) }\left( t^{\sigma
+n-1}\right) \right) \left( x\right) \\
&=&\frac{\Gamma \left( \frac{3}{2}\right) }{\sqrt{\pi }}\sum_{n=0}^{\infty }%
\frac{\Gamma \left( \frac{n+1}{2}\right) }{n!}\frac{x^{\eta +\sigma
+n}\Gamma \left( \sigma +n\right) \Gamma \left( 1+\frac{\eta }{1-\alpha }%
\right) }{\left[ a\left( 1-\alpha \right) \right] ^{\sigma +n}\Gamma \left(
1+\frac{\eta }{1-\alpha }+\sigma +n\right) } \\
&=&\frac{\Gamma \left( 1+\frac{\eta }{1-\alpha }\right) }{2\left[ a\left(
1-\alpha \right) \right] ^{\sigma }}x^{\eta +\sigma } \sum_{n=0}^{\infty }%
\frac{\Gamma \left( \frac{n+1}{2}\right) \Gamma \left( \sigma +n\right) }{%
\Gamma \left( \frac{n+3}{2}\right) \Gamma \left( 1+\frac{\eta }{1-\alpha }%
+\sigma +n\right) }\frac{\left( x\lambda \right) ^{n}}{n!\left( a\left(
1-\alpha \right) \right) ^{n}}
\end{eqnarray*}

Similarly one can derive the pathway integral representation of %
\eqref{eqn-r1}, \eqref{eqn-r2}.
\end{proof}
\end{theorem}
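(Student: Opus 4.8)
The plan is to reduce this statement to the power-function rule for the pathway operator, Lemma~\ref{lem-3}, by expanding the integrand in its Maclaurin series and integrating term by term, in complete analogy with the proofs of the Marichev--Saigo--Maeda composition formulas above. First I would use the representation (\ref{e2}) to identify the integrand with a Bessel--Struve kernel: since $S_{1/2}(\lambda t)=(-1+e^{\lambda t})/(\lambda t)$, the function under the operator is $t^{\sigma-1}S_{1/2}(\lambda t)$. Substituting the power series (\ref{3}) with $\nu=1/2$, so that $\Gamma(\nu+1)=\Gamma(3/2)$ and the lower factor becomes $\Gamma(n/2+\nu+1)=\Gamma((n+3)/2)$, gives
\[
t^{\sigma-1}\,\frac{-1+e^{\lambda t}}{\lambda t}
=\sum_{n=0}^{\infty}\frac{\Gamma(3/2)\,\Gamma((n+1)/2)}{\sqrt{\pi}\,n!\,\Gamma((n+3)/2)}\,\lambda^{n}\,t^{\sigma+n-1}.
\]

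Next I would apply $P_{0+}^{(\eta,\alpha)}$ and interchange summation with integration; this is legitimate because the Bessel--Struve kernel is entire, so the series converges uniformly on the finite integration range $[0,x/(a(1-\alpha))]$. To each term I apply Lemma~\ref{lem-3} with $\beta$ replaced by $\sigma+n$, which is permitted under the hypotheses $\RM(\sigma+n)>0$ and $\RM(\eta/(1-\alpha))>-1$; this converts the $n$th term into $x^{\eta+\sigma+n}/[a(1-\alpha)]^{\sigma+n}$ times the gamma quotient $\Gamma(\sigma+n)\Gamma(1+\eta/(1-\alpha))/\Gamma(1+\sigma+\eta/(1-\alpha)+n)$. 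Pulling the $n$-independent constants outside the sum and using $\Gamma(3/2)/\sqrt{\pi}=1/2$ then yields the prefactor $x^{\eta+\sigma}\,\Gamma(1+\eta/(1-\alpha))/\bigl(2[a(1-\alpha)]^{\sigma}\bigr)$ that appears in the claim.

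Finally I would collect the surviving $n$-dependent factors, namely $\Gamma((n+1)/2)$ and $\Gamma(\sigma+n)$ in the numerator against $\Gamma((n+3)/2)$ and $\Gamma(1+\sigma+\eta/(1-\alpha)+n)$ in the denominator, together with $(x\lambda/[a(1-\alpha)])^{n}/n!$, and read them off as a $_{2}\Psi_{2}$ generalized Wright function via definition (\ref{Wright1}): the factors $\Gamma((n+1)/2)=\Gamma(\tfrac12+\tfrac n2)$ and $\Gamma(\sigma+n)$ furnish the top parameters $(\tfrac12,\tfrac12),(\sigma,1)$, while $\Gamma((n+3)/2)=\Gamma(\tfrac32+\tfrac n2)$ and $\Gamma(1+\sigma+\eta/(1-\alpha)+n)$ furnish the bottom parameters. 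I expect the only real difficulty to be bookkeeping rather than analysis: one must track the half-integer shifts $(n+1)/2$ and $(n+3)/2$ to the correct step size $\tfrac12$ in the Wright parameters, and confirm the convergence condition (\ref{Wright2}), $\sum_j\beta_j-\sum_i\alpha_i=(\tfrac12+1)-(\tfrac12+1)=0>-1$, which guarantees that the resulting $_{2}\Psi_{2}$ is entire and thereby justifies the term-by-term integration a posteriori.
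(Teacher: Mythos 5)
Your proposal follows exactly the paper's own route: identify $(-1+e^{\lambda t})/(\lambda t)$ with $S_{1/2}(\lambda t)$ via (\ref{e2}), expand by the series (\ref{3}) with $\nu=\tfrac12$, interchange sum and integral, apply Lemma \ref{lem-3} with $\beta=\sigma+n$, and resum as a ${}_{2}\Psi_{2}$. Your bookkeeping is in fact more careful than the paper's displayed computation (which drops the $t^{n}$ from the series and misprints the denominator $t$ as $2$), and it correctly shows that the first lower Wright parameter should read $\left(\tfrac32,\tfrac12\right)$ rather than the $\left(\tfrac12,\tfrac12\right)$ printed in the theorem statement.
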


\textbf{Conclusion}

In this present study, we consider Bessel Struve kernel function $S_{\alpha
}\left( \lambda z\right) ,\lambda ,z\in \mathbb{C}$ to obtain the results in
terms of generalized Wright functions by applying Marichev-Saigo-Maeda
operators. Also, the pathway integral representations Bessel struve kernel
function and its relation between many other functions also derived in this
study.


\begin{thebibliography}{99}
\bibitem{Balenu-b1} D. Baleanu, About fractional quantization and fractional
variational principles. Communication in Nonlinear Sciences and Numerical
Simulation. 2009; 14(6):2520--2523.

\bibitem{Kiriyakova-b1} V. Kiryakova, Generalized Fractional Calculus and
Applications. Vol. 301. Essex, UK: Longman Scientific \& Technical; 1994.

\bibitem{Balenu-b2} D. Baleanu , OG. Mustafa, On the global existence of
solutions to a class of fractional differential equations. Computers \&
Mathematics with Applications. 2010;59(5):1835--1841

\bibitem{Purohit-b1} Purohit SD, Kalla SL. On fractional partial
differential equations related to quantum mechanics. Journal of Physics A.
2011;44(4) 045202

\bibitem{Love} Love ER. Some integral equations involving hypergeometric
functions. Proceedings of the Edinburgh Mathematical Society.
1967;15(3):169--198.

\bibitem{McBride} A. C. McBride, Fractional powers of a class of ordinary
differential operators, Proc. London Math. Soc. (III) 45 (1982), 519--546.

\bibitem{Kalla1} S. L. Kalla, Integral operators involving Fox's H-function
I, Acta Mexicana Cienc. Tecn. 3 (1969), 117--122.

\bibitem{Kalla2} S. L. Kalla, Integral operators involving Fox's H-function
II, Acta Mexicana Cienc. Tecn. 7 (1969), 72--79.

\bibitem{Kalla3} S. L. Kalla - R. K. Saxena, Integral operators involving
hypergeometric functions, Math. Z. 108 (1969), 231--234.

\bibitem{Kalla4} S. L. Kalla - R. K. Saxena, Integral operators involving
hypergeometric functions II, Univ. Nac. Tucum%
\'{}%
an, Rev. Ser., A24 (1974), 31--36.

\bibitem{Saigo1} M. Saigo, A remark on integral operators involving the
Gauss hypergeometric functions, Math. Rep. Kyushu Univ. 11 (1978), 135--143.

\bibitem{Saigo2} M. Saigo, A certain boundary value problem for the
Euler-Darboux equation I, Math. Japonica 24 (4) (1979), 377--385.

\bibitem{Saigo3} M. Saigo, A certain boundary value problem for the
Euler-Darboux equation II, Math. Japonica 25 (2) (1980), 211--220.

\bibitem{Saigo4} M. Saigo\ and\ N. Maeda, More generalization of fractional
calculus, in \textit{Transform methods \& special functions, Varna '96},
386--400, Bulgarian Acad. Sci., Sofia.

\bibitem{Srivastava-Karlson} H. M. Srivastava - P.W. Karlson, Multiple
Gaussion Hypergeometric Series, Ellis Horwood Limited, New York, 1985.

\bibitem{Erdelyi} A. Erd%
\'{}%
elyi -W. Magnus - F. Oberhettinger - F. G. Tricomi, Higher Transcendental
Functions, Vol.1, McGraw-Hill, New York, 1953.

\bibitem{Prudnikov} A. P. Prudnikov - Yu. A. Brychkov - O. I. Marichev,
Integrals and Series. Special Functions, Vol. 1-5, Gordon \& Breach, New
York, 1986.

\bibitem{Kim-Lee-HMSri} Y. C. Kim, K. S. Lee\ and\ H. M. Srivastava, Some
applications of fractional integral operators and Ruscheweyh derivatives, J.
Math. Anal. Appl. \textbf{197} (1996), no.~2, 505--517.

\bibitem{Kir1} V. Kiryakova, All the special functions are fractional
differintegrals of elementary functions, J. Phys. A \textbf{30} (1997),
no.~14, 5085--5103.

\bibitem{Miller-Ross} K. S. Miller - B. Ross, An Introduction to the
Fractional Calculus and Differential Equations, A Wiley Interscience
Publication, John Wiley and Sons Inc., New York, 1993.

\bibitem{Kiriyakova-b2} V. Kiryakova, Generalized Fractional Calculus and
Applications, Longman Scientific \& Tech., Essex, 1994.

\bibitem{Marichev1} O. I. Marichev, Volterra equation of Mellin convolution
type with a Horn function in the kernel (In Russian). Izv. AN BSSR Ser.
Fiz.-Mat. Nauk 1 (1974), 128--129.

\bibitem{saxena-saigo} R. K. Saxena\ and\ M. Saigo, Generalized fractional
calculus of the $H$-function associated with the Appell function $F\sb 3$,
J. Fract. Calc. \textbf{19} (2001), 89--104.

\bibitem{HMSri-frac-Hfunction} H. M. Srivastava,\ S.-D. Lin\ and\ P.-Y.
Wang, Some fractional-calculus results for the $\overline{H}$-function
associated with a class of Feynman integrals, Russ. J. Math. Phys. \textbf{%
13.}

\bibitem{Samko1} S. Samko - A. Kilbas - O. Marichev, Fractional Integrals
and Derivatives. Theory and Applications, Gordon \& Breach Sci. Publ., New
York, 1993.

\bibitem{Fox} C. Fox, The asymptotic expansion of generalized hypergeometric
functions, Proc. London. Math. Soc. \textbf{27} (1928), no.~4, 389-400.

\bibitem{Wright-2} E. M. Wright, The asymptotic expansion of integral
functions defined by Taylor series, Philos. Trans. Roy. Soc. London, Ser. A. 
\textbf{238} (1940), 423--451.

\bibitem{Wright-3} E. M. Wright, The asymptotic expansion of the generalized
hypergeometric function, Proc. London Math. Soc. (2) \textbf{46} (1940),
389--408.

\bibitem{Kilbas} A. A. Kilbas, M. Saigo\ and\ J. J. Trujillo, On the
generalized Wright function, Fract. Calc. Appl. Anal. \textbf{5} (2002),
no.~4, 437--460.

\bibitem{Kilbas-itsf} A. A. Kilbas\ and\ N. Sebastian, Generalized
fractional integration of Bessel function of the first kind, Integral
Transforms Spec. Funct. \textbf{19} (2008), no.~11-12, 869--883.

\bibitem{Kilbas-frac} A. A. Kilbas\ and\ N. Sebastian, Fractional
integration of the product of Bessel function of the first kind, Fract.
Calc. Appl. Anal. \textbf{13} (2010), no.~2, 159--175.

\bibitem{HMSri-Wright} H. M. Srivastava, Some Fox-Wright generalized
hypergeometric functions and associated families of convolution operators,
Appl. Anal. Discrete Math. \textbf{1} (2007), no.~1, 56--71.

\bibitem{Watson} G.N. Watson, A treatise on the theory of Bessel functions,
Cambridge university press (Cambridge, 1992)

\bibitem{Gasmi} A. Gasmi and M. Sifi, The Bessel-Struve intertwinning
operator on C and mean-periodic functions, IJMMS 2004:59, 3171--3185

\bibitem{Nair-1} S. S. Nair: Pathway fractional integration operator, Fract.
Calc. Appl. Anal., 12(3) (2009), 237-252.

\bibitem{Mathai-pathway} A. M. Mathai : A pathway to matrix-variate gamma
and normal densities, Linear Algebra Appl., 396 (2005), 317-328.

\bibitem{Mathai-Habold-1} A. M. Mathai, H. J. Haubold: On generalized
distributions and path-ways, Phys. Lett. A, 372 (2008), 2109-2113.

\bibitem{Mathai-Habold-2} A. M. Mathai, H. J. Haubold: Pathway model,
superstatistics, Tsallis statistics and a generalize measure of entropy,
Phys. A, 375 (2007), 110-122.

\bibitem{Praveen-pathway} Dumitru Baleanu, Praveen Agarwal: A Composition
Formula of the Pathway Integral Transform Operator,Note di Matematica,Note
Mat. 34 (2014) no. 2, 145--155.

\bibitem{Rainville} E. D. Rainville, \textit{Special functions}, Macmillan,
New York, 1960.

\bibitem{M Saigo} M. Saigo, A remark on integral operators involving the
Gauss hypergeometric functions, Math. Rep. Kyushu Univ. \textbf{11}
(1977/78), no.~2, 135--143.

\bibitem{Nisar} S.R Mondal, K.S Nisar, Marichev-Saigo-Maeda Fractional
Integration Operators Involving Generalized Bessel Functions, Mathematical
Problems in Engineering, Volume 2014, Article ID 274093, 11 pages
\end{thebibliography}
\end{document}